\newtheorem{lem}{Lemma}[section]
\newtheorem{thm}[lem]{Theorem}
\newtheorem{prop}[lem]{Proposition}
\newtheorem{cor}[lem]{Corollary}
\numberwithin{equation}{section}
\newtheorem*{cor*}{Corollary}
\newtheorem*{thm*}{Theorem}
\theoremstyle{definition}
\newtheorem{defi}{Definition}[section]
\theoremstyle{remark}
\newcommand{\N}{\mathbb{N}}
\newcommand{\Z}{\mathbb{Z}}
\newcommand{\Q}{\mathbb{Q}}
\newcommand{\R}{\mathbb{R}}
\newcommand{\T}{\mathbb{T}}
\renewcommand{\lvert}{\left\vert}
\renewcommand{\rvert}{\right\vert}
\title{Dynamical systems and uniform distribution of sequences}
\author[M. G. Madritsch]{Manfred G. Madritsch}
\address[M. G. Madritsch]{
\noindent 1. Universit\'e de Lorraine, Institut Elie Cartan de Lorraine, UMR 7502, Vandoeuvre-l\`es-Nancy, F-54506, France;\newline
\noindent 2. CNRS, Institut Elie Cartan de Lorraine, UMR 7502, Vandoeuvre-l\`es-Nancy, F-54506, France}
\email{manfred.madritsch@univ-lorraine.fr}
\author[R. F. Tichy]{Robert F. Tichy}
\address[R. F. Tichy]{Department for Analysis and Computational Number
  Theory\\Graz University of Technology\\A-8010 Graz, Austria}
\email{tichy@tugraz.at}
\dedicatory{Dedicated to the memory of Professor Wolfgang Schwarz}
\subjclass[2010]{}
\keywords{}
\date{\today}
\begin{document}

\begin{abstract}
  We give a survey on classical and recent applications of dynamical
  systems to number theoretic problems. In particular, we focus on
  normal numbers, also including computational aspects. The main
  result is a sufficient condition for establishing multidimensional
  van der Corput sets. This condition is applied to various examples.
\end{abstract}

\maketitle

\section{Dynamical systems in number theory}
In the last decades dynamical systems became very important for the
development of modern number theory. The present paper focuses on
Furstenberg's refinements of Poincaré's recurrence theorem and
applications of these ideas to Diophantine problems.

A (measure-theoretic) dynamical system is formally given as a
quadruple $(X,\mathfrak{B}, \mu, T)$, where $(X,\mathfrak{B},\mu)$
is a probability space with $\sigma$-algebra $\mathfrak{B}$ of
measurable sets and $\mu$ a probability measure; $T\colon
X\rightarrow X$ is a measure-preserving transformation on this
space, \textit{i.e.} $\mu(T^{-1}A)= \mu(A)$ for all measurable
sets $A\in\mathfrak{B}$. In the theory of dynamical systems,
properties of the iterations of the transformation $T$ are of
particular interest. For this purpose we only consider invertible
transformations and call such dynamical systems invertible.

The first property, we consider, originates from Poncaré's famous
recurrence theorem (see Theorem 1.4 of
\cite{walters1982:introduction_to_ergodic} or Theorem 2.11 of
\cite{einsiedler_ward2011:ergodic_theory_with}) saying that
starting from a set $A$ of positive measure $\mu(A)>0$ and
iterating $T$ yields infinitely many returns to $A$. More
generally, we call a subset $\mathcal{R}\subset\mathbb{N}$ of the
positive integers a set of recurrence if for all invertible
dynamical systems and all measurable sets $A$ of positive measure
$\mu (A)>0$ there exists $n\in\mathcal{R}$ such that $\mu(A\cap
T^{-n}A)>0$. Then Poincaré's recurrence theorem means that $\N$
is a set of recurrence.

A second important theorem for dynamical systems is Birkhoff's ergodic
theorem (see Theorem 1.14 of
\cite{walters1982:introduction_to_ergodic} or Theorem 2.30 of
\cite{einsiedler_ward2011:ergodic_theory_with}). We call $T$ ergodic
if the only invariant sets under $T$ are sets of measure $0$ or of
measure $1$, \textit{i.e.} $T^{-1}A=A$ implies $\mu(A)=0$ or
$\mu(A)=1$. Then Birkhoff's ergodic theorem connects average in time
with average in space, \textit{i.e.}
\[\lim_{N\rightarrow\infty}\frac{1}{N}\sum^{N-1}_{n=0}f \circ
T^{n}(x)= \int_{X}f(x)d\mu(x)\]
for all $f\in L^{1}(X,\mu)$ and $\mu$-almost all $x \in X$.

Let us explain an important application of this theorem to number
theory. For $q\geq2$ a positive integer, consider
$T\colon[0,1)\rightarrow [0,1)$ defined by $T(x)=\{qx\},$ where
$\{t\}=t-\lfloor t\rfloor$ denotes the fractional part of $t$.
If $x\in\R$ is given by its $q$-ary digit expansion
$x=\lfloor x\rfloor+ \sum^{\infty}_{j=1}a_{j}(x)q^{-j}$, then the
digits $a_{j}(x)$ can be computed by iterating this transformation
$T$: $a_{j}(x)=i$ if $T^{j-1}x\in\left[\tfrac iq,\frac{i+1}q\right)$
with $i\in\{0,1,\ldots,q-1\}$. Moreover, since $a_j(Tx)=a_{j+1}(x)$
for $j\geq1$ the transformation $T$ can be seen as a left shift of the
expansion.

Now we call a real number $x$ simply normal in base $q$ if
\[\lim_{\mathbb{N}\rightarrow\infty}\frac{1}{N}\#\{j\leq N\colon a_{j}=d\}= \frac{1}{q}\]
for all $d=0, \ldots, q-1$, \textit{i.e.} all digits $d$ appear
asymptotically with equal frequencies $1/q.$ A number $x$ is
called $q$-normal if it is simply normal with respect to all bases
$q, q^{2},q^{3},\ldots$. This is equivalent to the fact that the
sequence $(\{q^{n}x\})_{n\in\mathbb{N}}$ is uniformly distributed
modulo $1$ (for short: u.d.~mod $1$), which also means that all
blocks $d_{1},d_{2},\ldots, d_{L}$ of subsequent digits appear in
the expansion of $x$ asymptotically with the same frequency
$q^{-L}$ (\textit{cf.} \cite{bugeaud2012:distribution_modulo_one,
  drmota_tichy1997:sequences_discrepancies_and,
  kuipers_niederreiter1974:uniform_distribution_sequences}). For
completeness, let us give here one possible definition of u.d.
sequences $(x_{n})$: a sequence of real numbers $x_{n}$ is called
u.d.~mod $1$ if for all continuous functions $f: [0,1]\rightarrow
\mathbb{R}$

\begin{equation}\label{ud}
\lim_{N\rightarrow\infty}\frac{1}{N}\sum^{N}_{n=1}f(x_{n})=
\int^{1}_{0}f(x)dx.
\end{equation}

Note, that by Weyl's criterion the class of continuous functions
can be replaced by trigonometric functions $e(hx)=e^{2\pi ihx}$,
$h\in \mathbb{N}$ or by characteristic functions $1_{I}(x)$ of
intervals $I=[a,b)$. Applying Birkhoff's ergodic theorem, shows
that Lebesgue almost all real numbers are $q$-normal in any base
$q\geq 2$. Defining a real number to be absolutely normal if it is
$q$-normal for all bases $q\geq 2$, this immediately yields that
almost all real numbers are absolutely normal.

In particular, this shows the existence of absolutely normal
numbers. However, it is a different story to find constructions of
(absolutely) normal numbers. It is a well-known difficult open problem
to show that important numbers like $\sqrt{2}$, $\ln 2$, $e$, $\pi$
etc. are simply normal with respect to some given base $q\geq 2$. A
much easier task is to give constructions of $q$-normal numbers for
fixed base $q$. Champernowne
\cite{champernowne1933:construction_decimals_normal} proved that

$$0.1\,2\,3\,4\,5\,6\,7\,8\,9\,10\,11\,12\ldots$$

is normal to base $10$ and later this type of constructions was
analysed in detail. So, for instance, for arbitrary base $q\geq 2$

\[0.\langle\lfloor g(1)\rfloor\rangle_{q}\; \langle\lfloor g(2)\rfloor\rangle_{q}\ldots\]

is $q$-normal, where $g(x)$ is a non-constant polynomial with real
coefficients and the $q-$normal number is constructed by concatenating
the $q-$ary digit expansions $\langle\lfloor g(n)\rfloor\rangle_{q}$
of the integer parts of the values $g(n)$ for $n=1,2,\ldots$. These
constructions were extended to more general classes of functions $g$
(replacing the polynomials) (see
\cite{nakai_shiokawa1992:discrepancy_estimates_class,
  nakai_shiokawa1990:class_normal_numbers,
  madritsch_thuswaldner_tichy2008:normality_numbers_generated,
  madritsch_tichy2013:construction_normal_numbers,
  davenport_erdoes1952:note_on_normal,
  schiffer1986:discrepancy_normal_numbers}) and the concatenation of
$\langle[g(p)]\rangle_{q}$ along prime numbers instead of the positive
integers (see \cite{nakai_shiokawa1997:normality_numbers_generated,
  madritsch2014:construction_normal_numbers,
  copeland_erdoes1946:note_on_normal,
  madritsch_tichy2013:construction_normal_numbers}).

All such constructions depend on the choice of the base number
$q\geq 2$, and thus they are not suitable for constructing
absolutely normal numbers. A first attempt to construct absolutely
normal numbers is due to Sierpinski
\cite{sierpinski1917:demonstration_elementaire_du}. However,
Turing \cite{turing1992:note_on_normal} observed that Sierpinski's
``construction'' does not yield a computable number, thus it is
not based on a recursive algorithm. Furthermore, Turing gave an
algorithm for a construction of an absolutely normal number. This
algorithm is very slow and, in particular, not polynomially in
time. It is very remarkable that Becher \text{et al.}
\cite{becher_heiber_slaman2013:polynomial_time_algorithm}
established a polynomial time algorithm for the construction of
absolutely normal numbers. However, there remain various questions
concerning the analysis of these algorithms. The discrepancy of
the corresponding sequences is not studied and the order of
convergence of the expansion is very slow and should be
investigated in detail. Furthermore, digital expansions with
respect to linear recurring base sequences seam appropriate to be
included in the study of absolute normality from a computational
point of view.

Let us now return to Poincaré's recurrence theorem which shortly states
that the set $\mathbb{N}$ of positive integers is a recurrence set. In
the 1960s various stronger concepts were introduced:
\begin{enumerate}[(i)]
\item $\mathcal{R}\subseteq \mathbb{N}$ is called a nice recurrence
  set if for all invertible dynamical systems and all measurable sets
  $A$ of positive measure $\mu(A)>0$ and all $\varepsilon > 0,$ there
  exist infinitely many $n\in \mathcal{R}$ such that $$\mu(A\cup
  T^{-n}A)>\mu(A)^{2}- \varepsilon.$$
\item $\mathcal{H}\subseteq\mathbb{N}$ is called a van der Corput set
  (for short: vdC set) if for all $h\in\mathcal{H}$ the following
  implications holds:
  \[(x_{n+h}-x_{n})_{n\in\mathbb{N}}\; \text{is u.d.~mod $1$}\Longrightarrow
  (x_{n})_{n\in\mathbb{N}}\; \text{is u.d.~mod $1$.}\]
\end{enumerate}

Clearly, any nice recurrence set is a recurrence set. By van der
Corput's difference theorem (see
\cite{kuipers_niederreiter1974:uniform_distribution_sequences,
  drmota_tichy1997:sequences_discrepancies_and}) the set
$\mathcal{H}= \N$ of positive integers is a vdC set. Kamae and
Mend\`es-France \cite{kamae_mendes1978:van_der_corputs} proved that
any vdC set is a nice recurrence set. Ruzsa
\cite{ruzsa1984:connections_between_uniform} conjectured that any
recurrence set is also vdC. An important tool in the analysis of
recurrence sets is their equivalence with intersective (or difference)
sets established by Bertrand-Mathis
\cite{bertrand-mathis1986:ensembles_intersectifs_et}. We call a set
$\mathcal{I}$ intersective if for each subset $E\subseteq \mathbb{N}$
of positive (upper) density, there exists $n\in\mathcal{I}$ such that
$n=x-y$ for some $x,y\in E$. Here the upper density of $E$ is defined
as usual by
\[\overline{d}(E)=\limsup_{N\to\infty}\frac{\#(E\cap[1,N])}{N}.\]
Bourgain \cite{bourgain1987:ruzsas_problem_on} gave an example of an
intersective set which is not a vdC set, hence contradicting the above
mentioned conjecture of Ruzsa.

Furstenberg \cite{furstenberg1977:ergodic_behavior_diagonal}
proved that the values $g(n)$ of a polynomial $g\in \mathbb{Z}[x]$
with $g(0)=0$ form an intersective set and later it was shown by
Kamae and Mend\`es-France \cite{kamae_mendes1978:van_der_corputs}
that this is a vdC set, too. It is also known, that for fixed
$h\in \mathbb{Z}$ the set of shifted primes $\{p\pm h\colon
p\text{ prime}\}$ is a vdC set if and only if $h=\pm 1.$
(\cite[Corollary 10]{montgomery1994:ten_lectures_on}). This leads
to interesting applications to additive number theory, for
instance to new proofs and variants of theorems of S\'ark\H{o}zy
\cite{sarkozy1978:difference_sets_sequences1,
  sarkoezy1978:difference_sets_sequences3,
  sarkoezy1978:difference_sets_sequences2}. A general result
concerning intersective sets related to polynomials along primes is
due to Nair \cite{nair1992:certain_solutions_diophantine}.

In the present paper we want to extend the concept of recurrence sets,
nice recurrence sets and vdC sets to subsets of $\Z^{k},$ following
the program of Bergelson and Lesigne
\cite{bergelson_lesigne2008:van_der_corput} and our earlier
paper~\cite{bergelson_kolesnik_madritsch+2014:uniform_distribution_prime}. In
section 2 we summarize basic facts concerning these concepts,
including general relations between them and counter examples. Section
3 is devoted to a sufficient condition for establishing the vdC
property. In the final section 4 we collect various examples and give
some new applications.

\section{Van der Corput sets}

In this section we provide various equivalent definitions of van der
Corput sets in $\Z^k$. In particular, we give four different
definitions, which are $k$-dimensional variants of the one dimensional
definitions, whose equivalence is due to Ruzsa
\cite{ruzsa1984:connections_between_uniform}. These generalizations
were established by Bergelson and Lesigne
\cite{bergelson_lesigne2008:van_der_corput}.  Then we present a set,
which is not a vdC set in order to give some insight into the
structure of vdC sets. Finally, we define the
higher-dimensional variant of nice recurrence sets.

\subsection{Characterization via uniform distribution}

Similarly to above we first define a van der Corput set (vdC set for
short) in $\Z^k$ via
uniform distribution.
\begin{defi}
  A subset $\mathcal{H}\subset\Z^k\setminus\{0\}$ is a vdC set if any
  family $(x_{\mathbf{n}})_{\mathbf{n}\in\N^k}$ of real numbers is u.d.~mod $1$
  provided that it has
  the property that for all $\mathbf{h}\in \mathcal{H}$ the family
  $(x_{\mathbf{n}+\mathbf{h}}-x_{\mathbf{n}})_{\mathbf{n}\in\N^k}$ is
  u.d.~mod $1$.

  Here the property of u.d.~mod $1$ for the multi-indexed family
  $(x_{\mathbf{n}})_{\mathbf{n}\in\N^k}$ is defined via a natural
  extension of \ref{ud}:
\end{defi}

\begin{equation}\label{ud1}
\lim_{N_1,N_2,\ldots,N_k\to+\infty}\frac1{N_1N_2\cdots N_k}
  \sum_{0\leq\mathbf{n}<(N_1,N_2,\ldots,N_k)}f(x_{\mathbf n})=
\int^{1}_{0}f(x)dx
\end{equation}

for all continuous functions $f:[0,1]\rightarrow\mathbb{R}.$ Here
in the limit $N_1, N_2,\ldots, N_k$ are tending to infinity
independently and $<$ is defined componentwise.

Using the $k$-dimensional variant of van der Corput's inequality
we could equivalently define a vdC set as follows:

\begin{defi}
  A subset $\mathcal{H}\subset\Z^k\setminus\{0\}$ is a van der Corput
  set if for any family $(u_\mathbf n)_{n\in\Z^k}$ of complex numbers of
  modulus $1$ such that \[\forall\mathbf{h}\in \mathcal{H},\quad
  \lim_{N_1,N_2,\ldots,N_k\to+\infty}\frac1{N_1N_2\cdots N_k}
  \sum_{0\leq\mathbf{n}<(N_1,N_2,\ldots,N_k)}u_{\mathbf{n}+\mathbf{h}}\overline{u_{\mathbf{n}}}=0\]
the relation
\[\lim_{N_1,N_2,\ldots,N_k\to+\infty}\frac1{N_1N_2\cdots
    N_k}
  \sum_{0\leq\mathbf{n}<(N_1,N_2,\ldots,N_k)}u_{\mathbf{n}}=0\]
  holds.
\end{defi}

\subsection{Trigonometric polynomials and spectral characterization}

The first two definitions are not very useful for proving or
disproving that a set $\mathcal{H}$ is a vdC set. Similar to the one
dimensional case the following spectral characterization involving
trigonometric polynomials is a better tool.

\begin{thm}[{\cite[Proposition
    1.18]{bergelson_lesigne2008:van_der_corput}}]
  A subset $\mathcal{H}\subset\Z^k\setminus\{0\}$ is a van der Corput set if and
  only if for all $\varepsilon>0$, there exists a real trigonometric
  polynomial $P$ on the $k$-torus $\T^k$ whose spectrum is contained
  in $\mathcal{H}$ and which satisfies $P(0)=1$, $P\geq-\varepsilon$.
\end{thm}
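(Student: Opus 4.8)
The plan is to route the equivalence through the \emph{correlation (spectral) measure} and to handle the two implications by a Hahn--Banach duality on one side and a realization argument on the other. Throughout write $|N|=N_1\cdots N_k$, and for a family $(u_{\mathbf n})_{\mathbf n\in\Z^k}$ of modulus $1$ call
\[\gamma(\mathbf h)=\lim_{N_1,\dots,N_k\to\infty}\frac1{|N|}\sum_{0\le\mathbf n<(N_1,\dots,N_k)}u_{\mathbf n+\mathbf h}\overline{u_{\mathbf n}}\]
its correlation, whenever the limit exists. Since $\gamma$ is positive-definite on $\Z^k$ with $\gamma(\mathbf 0)=1$, the Herglotz--Bochner theorem produces a probability measure $\sigma$ on $\T^k$ with $\gamma(\mathbf h)=\int_{\T^k}e(\mathbf h\cdot\mathbf t)\,d\sigma(\mathbf t)$. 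The guiding principle is that $\mathcal H$ is a vdC set if and only if every such $\sigma$ with $\widehat\sigma|_{\mathcal H}=0$ has no atom at the origin, i.e.\ $\sigma(\{\mathbf 0\})=0$; I shall show that this last condition is \emph{dual} to the existence of the polynomials $P$.

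For the easier implication, assume the polynomials exist and let $(u_{\mathbf n})$ satisfy $\gamma(\mathbf h)=0$ for all $\mathbf h\in\mathcal H$; I argue by contradiction that its mean tends to $0$. If some sequence of boxes had $\big|\tfrac1{|N|}\sum_{\mathbf n}u_{\mathbf n}\big|\ge\delta>0$, I would first refine it (a diagonal extraction over the countable set $\Z^k$) so that \emph{every} correlation converges, to a positive-definite $\gamma$ with $\gamma|_{\mathcal H}=0$ and associated measure $\sigma$. Given $\varepsilon>0$, integrating the corresponding $P$ against $\sigma$ gives $\int P\,d\sigma=\sum_{\mathbf h}\widehat P(\mathbf h)\gamma(\mathbf h)=0$, since every frequency of $P$ lies in $\mathcal H\cup(-\mathcal H)$ where $\gamma$ vanishes; isolating the atom and using $P(\mathbf 0)=1$, $P\ge-\varepsilon$ yields $0\ge\sigma(\{\mathbf 0\})-\varepsilon$, hence $\sigma(\{\mathbf 0\})=0$. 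The $k$-dimensional van der Corput inequality built from the product Fejér kernel (a product of one-dimensional Fejér kernels, i.e.\ a genuine square, hence valid in every dimension) bounds $\delta^2\le\tfrac{1}{H_1\cdots H_k}\sum_{|h_i|<H_i}\prod_{i=1}^k(1-|h_i|/H_i)\,\gamma(\mathbf h)$ for every $\mathbf H$, and letting all $H_i\to\infty$ the right-hand side tends to $\sigma(\{\mathbf 0\})=0$ by Wiener's lemma — a contradiction. Hence $\mathcal H$ is a vdC set.

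The converse I would prove by contraposition. Suppose the polynomials fail at some $\varepsilon_0>0$, so that the affine set $\{P+\varepsilon_0:\ \mathrm{spec}(P)\subseteq\mathcal H\cup(-\mathcal H),\ P(\mathbf 0)=1\}$ is disjoint from the open cone of strictly positive functions in $C(\T^k)$. Geometric Hahn--Banach separates them by a nonzero measure $\nu\in M(\T^k)$; testing against the cone forces $\nu\ge0$ (and $\alpha\le0$), and, after normalizing $\nu(\T^k)=1$, testing against the affine directions forces $\int Q\,d\nu=0$ for every real trigonometric polynomial $Q$ with spectrum in $\mathcal H\cup(-\mathcal H)$ and $Q(\mathbf 0)=0$. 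Applying this to $\sin(2\pi\mathbf h\cdot\mathbf t)$ and to the differences $\cos(2\pi\mathbf h\cdot\mathbf t)-\cos(2\pi\mathbf h'\cdot\mathbf t)$ shows $\widehat\nu\equiv\beta$ on $\mathcal H$, where $\beta$ is the common value $\int P\,d\nu\le-\varepsilon_0<0$. Consequently $\sigma:=\nu+|\beta|\,\delta_{\mathbf 0}$ is a positive measure with $\widehat\sigma|_{\mathcal H}=0$ and $\sigma(\{\mathbf 0\})\ge|\beta|>0$. Realizing $\sigma$ as the correlation measure of an explicit modulus-$1$ family $(u_{\mathbf n})$ then exhibits correlations vanishing on $\mathcal H$ while the atom at the origin keeps the mean from vanishing, so $\mathcal H$ fails to be a vdC set.

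I expect the realization step to be the main obstacle. Producing a modulus-$1$ sequence with a prescribed correlation measure — equivalently, a generic point of a measure-preserving $\Z^k$-action whose maximal spectral type is $\sigma$, or a suitable random trigonometric series — is delicate precisely because of the modulus constraint, and one must check that the averages over rectangular boxes (with sides tending to infinity independently) actually converge to $\widehat\sigma$. By contrast the Hahn--Banach separation and the Wiener-type atom detection are comparatively routine; the crux is manufacturing the counterexample sequence from $\sigma$ and controlling its box averages.
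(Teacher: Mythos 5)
First, a point of reference: the paper does not prove this statement at all --- it is quoted verbatim as Proposition 1.18 of Bergelson--Lesigne, and the survey states it together with the companion spectral characterization (their Theorem 1.8: $\mathcal{H}$ is vdC iff every positive measure $\sigma$ on $\T^k$ with $\widehat{\sigma}$ vanishing on $\mathcal{H}$ has $\sigma(\{\mathbf 0\})=0$). So your proposal can only be judged on its own merits and against that cited literature. Your forward direction (polynomials $\Rightarrow$ vdC) is correct and complete in outline: the diagonal extraction to a genuine correlation function $\gamma$, the Herglotz--Bochner measure $\sigma$, the computation $\int P\,d\sigma=0$ forcing $\sigma(\{\mathbf 0\})\le\varepsilon$ for every $\varepsilon$, and the product-Fej\'er van der Corput inequality with dominated convergence to contradict $\delta>0$ --- this is the standard argument and it works. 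Likewise the Hahn--Banach separation in the converse, producing a probability measure $\nu$ with $\widehat{\nu}\equiv\beta\le-\varepsilon_0$ on $\mathcal{H}$ and hence $\sigma=\nu+|\beta|\delta_{\mathbf 0}$ with $\widehat{\sigma}|_{\mathcal H}=0$ and $\sigma(\{\mathbf 0\})>0$, is sound (your handling of the $\mathcal{H}\cup(-\mathcal{H})$ symmetry convention is also the right reading of ``spectrum contained in $\mathcal{H}$'' for real polynomials).

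The genuine gap is exactly the one you flag: having manufactured $\sigma$, you still must conclude that $\mathcal{H}$ is \emph{not} a vdC set, i.e.\ realize $\sigma$ (or enough of it) as the limiting correlation structure of a modulus-one family over boxes whose sides grow independently, with mean bounded away from zero. This realization is not a ``delicate but routine'' detail; it is precisely the hard direction of Bergelson--Lesigne's Theorem 1.8, and in their paper (as in Ruzsa's and Montgomery's one-dimensional treatments) it occupies the bulk of the work, via a block-by-block construction and careful control of rectangular averages. As written, your converse only proves ``no polynomials $\Rightarrow$ there exists a bad measure,'' which is weaker than ``no polynomials $\Rightarrow$ not vdC.'' Within the context of this survey the repair is cheap: the spectral characterization is stated as a known theorem immediately after the one you are proving, so you should simply invoke it --- then both of your directions collapse to the duality computations you already have (polynomials kill atoms of annihilating measures; absence of polynomials produces an annihilating measure with an atom), and the sequence construction never needs to be touched. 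Without that citation, your proof is incomplete at its crux.
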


The set of polynomials fulfilling the last theorem for a given
$\varepsilon$ forms a convex set. Moreover the conditions may be
interpreted as some infimum. Therefore we might expect some dual
problem, which is actually provided by the following theorem. For
details see Bergelson and Lesigne
\cite{bergelson_lesigne2008:van_der_corput} or Montgomery
\cite{montgomery1994:ten_lectures_on}.

\begin{thm}[{\cite[Theorem
    1.8]{bergelson_lesigne2008:van_der_corput}}]
  Let $\mathcal{H}\subset\Z^k\setminus\{0\}$. Then $\mathcal{H}$ is a
  van der Corput set if and only if for any positive measure $\sigma$
  on the $k$-torus $\T^k$ such that, for all $\mathbf{h}\in \mathcal{H}$,
  $\widehat{\sigma}(\mathbf{h})=0$, this implies
  $\sigma(\{(0,0,\ldots,0)\})=0$.
\end{thm}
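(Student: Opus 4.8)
The plan is to deduce this from the trigonometric-polynomial characterization in the preceding theorem, exploiting the linear-programming duality between polynomials (the primal) and measures (the dual) alluded to in the discussion before the statement. Throughout, let $V$ denote the real linear space of trigonometric polynomials $P(x)=\sum c_{\mathbf h}e(\mathbf h\cdot x)$ whose spectrum is contained in $\mathcal H\cup(-\mathcal H)$ (so $P$ has no constant term); since a real polynomial has symmetric spectrum, this is the natural space attached to $\mathcal H$. Observe also that for a positive measure $\sigma$ one has $\widehat\sigma(-\mathbf h)=\overline{\widehat\sigma(\mathbf h)}$, so the hypothesis $\widehat\sigma(\mathbf h)=0$ for all $\mathbf h\in\mathcal H$ is equivalent to $\int_{\T^k}P\,d\sigma=0$ for every $P\in V$.

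The easy direction (vdC $\Rightarrow$ the measure condition) is a direct integration. Given such a $\sigma$ and $\varepsilon>0$, pick by the previous theorem a real $P\in V$ with $P(0)=1$ and $P\ge-\varepsilon$. Since $\int P\,d\sigma=0$, splitting off the atom at $0$ gives
\[ \sigma(\{0\}) = P(0)\,\sigma(\{0\}) = -\int_{\T^k\setminus\{0\}}P\,d\sigma \le \varepsilon\,\sigma(\T^k), \]
and letting $\varepsilon\to0$ forces $\sigma(\{0\})=0$.

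For the converse I would argue contrapositively and, rather than chase a single Hahn--Banach extension (which naturally controls $\widehat\sigma$ but not the atom), compute the extremal atom size directly. Let $\mathcal M$ be the set of probability measures on $\T^k$ annihilating $\mathcal H$, i.e.\ $\sigma\ge0$, $\sigma(\T^k)=1$, $\int P\,d\sigma=0$ for all $P\in V$; it is nonempty (Lebesgue measure lies in it), convex, and weak-$*$ compact, while $\sigma\mapsto\sigma(\{0\})$ is upper semicontinuous, so $m:=\sup_{\sigma\in\mathcal M}\sigma(\{0\})$ is attained. The measure condition holds precisely when $m=0$, so it suffices to prove that $\mathcal H$ is vdC iff $m=0$. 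Writing $\sigma(\{0\})=\inf\{\int g\,d\sigma : g\in C(\T^k),\ g\ge0,\ g(0)=1\}$ and applying Sion's minimax theorem to the bilinear pairing $(\sigma,g)\mapsto\int g\,d\sigma$ over the convex sets $\mathcal M$ and $G=\{g\ge0 : g(0)=1\}$ yields $m=\inf_{g\in G}\sup_{\sigma\in\mathcal M}\int g\,d\sigma$. The inner supremum is a moment problem whose dual, via Hahn--Banach separation (strong duality holding since $\mathcal M\ne\emptyset$), is $\sup_{\sigma\in\mathcal M}\int g\,d\sigma=\inf_{P\in V}\sup_{x}(g(x)-P(x))$, because $g\le t+P$ forces $\int g\,d\sigma\le t$ and this bound is tight. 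Interchanging the infima and minimizing over $g$ by a bump approximation (taking $g$ concentrated near $0$ with $g(0)=1$) gives $\inf_{g\in G}\sup_x(g-P)=\max\bigl(1-P(0),\ \sup_x(-P)\bigr)$, whence
\[ m = \inf_{P\in V}\max\bigl(1-P(0),\ \sup_{x}(-P(x))\bigr). \]
If $\mathcal H$ is vdC, choosing $P$ with $P(0)=1$ and $\sup(-P)\le\varepsilon$ shows $m\le\varepsilon$ for all $\varepsilon$, so $m=0$; if $\mathcal H$ is not vdC there is $\delta>0$ with $\sup(-P)\ge\delta$ whenever $P\in V$ and $P(0)=1$, and a short case split on the size of $P(0)$ (rescaling $P/P(0)$ when $P(0)>\tfrac12$) bounds the above maximum below by $\min(\tfrac12,\tfrac\delta2)>0$, so $m>0$. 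The extremal $\sigma\in\mathcal M$ attaining $m>0$ is then the desired counterexample.

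The main obstacle is the minimax interchange together with the Hahn--Banach/strong-duality step identifying $\sup_{\sigma\in\mathcal M}\int g\,d\sigma$ with $\inf_{P\in V}\sup_x(g-P)$: one must verify the hypotheses of Sion's theorem (weak-$*$ compactness and convexity of $\mathcal M$, convexity of $G$, and the appropriate semicontinuity of the pairing) and confirm that the moment problem has no duality gap. The remaining ingredients---the bump approximation computing $\inf_{g\in G}$ and the elementary evaluation of $m$---are routine, as is the bookkeeping relating the real space $V$ to the condition $\widehat\sigma(\mathbf h)=0$ through cosine and sine polynomials.
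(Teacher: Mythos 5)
Note first that the paper does not actually prove this theorem: it is quoted from Bergelson--Lesigne, and the only guidance in the text is the preceding remark that the measure criterion should be the ``dual problem'' of the trigonometric-polynomial criterion. Your argument is a correct realization of precisely that duality, so it can only be compared with the proof in the cited source. Your outline holds up: the integration argument for the easy direction is the standard one; $\mathcal{M}$ is weak-$*$ compact and convex, $\sigma\mapsto\sigma(\{0\})$ is upper semicontinuous, the hypotheses of Sion's theorem are met by the pairing $(\sigma,g)\mapsto\int g\,d\sigma$, the moment-problem identity $\sup_{\sigma\in\mathcal{M}}\int g\,d\sigma=\inf_{P\in V}\sup_x\bigl(g(x)-P(x)\bigr)$ is true, and the bump computation plus the rescaling case split correctly yield $m=\inf_{P\in V}\max\bigl(1-P(0),\,\sup_x(-P(x))\bigr)$, from which the equivalence with the polynomial criterion follows. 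One caveat: your parenthetical justification of the no-duality-gap step, ``since $\mathcal{M}\neq\emptyset$'', is not the actual reason (feasibility alone does not exclude a gap in infinite-dimensional linear programming); what makes it work is a Hahn--Banach extension of the functional $\alpha g+t\mathbf{1}+P\mapsto\alpha c+t$ dominated by $f\mapsto\sup f$, whose verification uses that every element of $V$ has zero mean, so that $\sup P\ge 0$ and $\inf(g-Q)\le c\le\sup(g-Q)$ for every $Q\in V$. Since you name Hahn--Banach and explicitly flag this step, that is a repairable imprecision rather than a fatal gap. As for the comparison: in the cited source the logical order is reversed --- the spectral characterization is proved directly from the definition of vdC sets (correlation limits of bounded families are positive definite, hence Fourier coefficients of a positive measure by Bochner--Herglotz, giving ``spectral $\Rightarrow$ vdC''; conversely a measure with $\widehat{\sigma}|_{\mathcal{H}}=0$ and an atom at $\mathbf{0}$ is converted into a counterexample family by randomization), and the polynomial criterion is only afterwards deduced from it by duality. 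You run the deduction in the opposite direction, taking the polynomial criterion (the paper's preceding theorem) as given; within this paper's ordering that is legitimate and non-circular, and it buys a purely functional-analytic proof that also produces the extremal measure, whereas the source's order of deduction is what one needs when neither criterion has yet been established.
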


\subsection{Examples}

The structure of vdC sets is better understood by first giving a
counter example. The following lemma shows to be very useful in
the construction of counter examples.

\begin{lem}\label{mt:infinite_intersection}
Let $\mathcal{H}\subset\N$. If there exists $q\in\N$ such that the
set $\mathcal{H}\cap q\N$ is finite, then the set $\mathcal{H}$ is
not a vdC set.
\end{lem}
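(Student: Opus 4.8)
The plan is to disprove the vdC property through the measure-theoretic characterization of van der Corput sets stated above (the dual criterion via positive measures, here with $k=1$): it suffices to produce a \emph{positive} measure $\sigma$ on $\T$ with $\widehat{\sigma}(h)=0$ for every $h\in\mathcal{H}$ and yet $\sigma(\{0\})>0$. Writing $F=\mathcal{H}\cap q\N$, which is finite by hypothesis, I would split the obstruction into two parts: the ``generic'' elements $\mathcal{H}\setminus F$ (those $h$ with $q\nmid h$) and the finitely many exceptional elements of $F$.

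First I would take the normalized counting measure on the rational points of denominator $q$,
\[
\mu_q=\frac1q\sum_{j=0}^{q-1}\delta_{j/q},
\]
whose Fourier coefficients $\widehat{\mu_q}(h)=\frac1q\sum_{j=0}^{q-1}e(-hj/q)$ equal $1$ when $q\mid h$ and $0$ otherwise. Thus $\widehat{\mu_q}$ already annihilates every $h\in\mathcal{H}$ with $q\nmid h$, that is all of $\mathcal{H}\setminus F$, and $\mu_q$ carries an atom of mass $1/q$ at $0$; the only defect is that $\widehat{\mu_q}(h)=1\neq0$ for $h\in F$.

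To repair the finitely many coefficients indexed by $F=\{h_1,\dots,h_m\}$ I would convolve with an auxiliary positive measure that kills exactly these frequencies while retaining an atom at $0$. Set $\lambda=\delta_0+g(t)\,dt$ with the explicit nonnegative density
\[
g(t)=2m-2\sum_{i=1}^{m}\cos(2\pi h_i t)\ge0,
\]
so that $\widehat{\lambda}(h_i)=1+\widehat{g}(h_i)=1-1=0$ for each $i$, while $\lambda(\{0\})=1$. Then I would take $\sigma=\mu_q*\lambda$. Using the product rule $\widehat{\sigma}=\widehat{\mu_q}\,\widehat{\lambda}$ gives $\widehat{\sigma}(h)=0$ whenever $q\nmid h$ (from the factor $\widehat{\mu_q}$) and whenever $h\in F$ (from the factor $\widehat{\lambda}$), hence $\widehat{\sigma}(h)=0$ for all $h\in\mathcal{H}$; positivity of $\sigma$ is automatic as a convolution of positive measures; and the point mass at $0$ persists, $\sigma(\{0\})\ge\mu_q(\{0\})\,\lambda(\{0\})=1/q>0$. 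The criterion then yields that $\mathcal{H}$ is not a vdC set.

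The main obstacle I anticipate is arranging all three requirements simultaneously: vanishing of $\widehat{\sigma}$ on the \emph{infinite} set $\mathcal{H}$, survival of a point mass at $0$, and positivity of the whole measure. The key realization is that $\mu_q$ disposes of the infinitely many ``good'' frequencies for free, reducing the problem to the finite set $F$; and that the corrector $\lambda$ must be neither purely atomic (hard to keep positive while matching prescribed finite Fourier data) nor purely absolutely continuous (which would destroy the atom at $0$), but a mixture of the two, after which the convolution theorem makes the verification routine. A minor point still to check is that the total mass $1+\int_0^1 g(t)\,dt=1+2m$ is finite, so that $\sigma$ is a legitimate finite positive measure.
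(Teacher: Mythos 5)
Your proof is correct, but it follows a genuinely different route from the paper. The paper's proof is a short reduction: it splits $\mathcal{H}$ into $\mathcal{H}\cap q\N$ (finite) and $\mathcal{H}\setminus q\N$ (containing no multiple of $q$), quotes Ruzsa's results that both kinds of sets fail to be vdC, and then invokes Ruzsa's union lemma (if $\mathcal{H}_1\cup\mathcal{H}_2$ is vdC, then $\mathcal{H}_1$ or $\mathcal{H}_2$ is vdC) to conclude. You instead work directly from the spectral characterization and build a single explicit witness: $\mu_q=\frac1q\sum_{j}\delta_{j/q}$ kills all frequencies not divisible by $q$, the mixed measure $\lambda=\delta_0+\bigl(2m-2\sum_i\cos(2\pi h_i t)\bigr)\,dt$ kills the finitely many exceptional frequencies while keeping its atom at $0$, and the convolution $\sigma=\mu_q*\lambda$ inherits all three required properties (vanishing Fourier coefficients on $\mathcal{H}$ by the product rule, positivity, and $\sigma(\{0\})\ge 1/q>0$). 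Your computations check out: $\widehat{g}(h_i)=-1$ since the $h_i$ are distinct and positive, $g\ge0$, and the absolutely continuous part of $\lambda$ contributes no atom. In effect, your argument unpacks and reproves the paper's black boxes: $\mu_q$ is the standard witness for the non-vdC property of $\{n: q\nmid n\}$, $\lambda$ is a witness for the finite set, and the convolution step is precisely the standard proof of Ruzsa's union lemma (Fourier coefficients multiply, positivity and the atom at $0$ survive). The paper's proof is shorter and leans on the literature; yours is self-contained modulo the spectral criterion and makes the underlying measure-theoretic mechanism completely explicit.
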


\begin{proof}
The proof is a combination of the following two observations of Ruzsa
\cite{ruzsa1984:connections_between_uniform} (see Theorem 2
and Corollary 3 of \cite{montgomery1994:ten_lectures_on}):
\begin{enumerate}
\item Let $m\in\N$. The sets $\{1,\ldots,m\}$ and $\{n\in\N\colon
  m\nmid n\}$ are both not vdC sets.
\item Let $\mathcal{H}=\mathcal{H}_1\cup\mathcal{H}_2\subset\N$. If
  $\mathcal{H}$ is a vdC set, then $\mathcal{H}_1$ or $\mathcal{H}_2$
  also has to be a vdC set.
\end{enumerate}
Suppose there exists a $q\in\N$ such that $\mathcal{H}\cap q\N$ is
finite. Then we may split $\mathcal{H}$ into the sets
$\mathcal{H}\cap q\N$ and $\mathcal{H}\setminus q\N$. The first
one is finite and the second one contains no multiples of $q$.
Therefore both are not vdC sets and hence $\mathcal{H}$ is not a
vdC set.
\end{proof}

The first counter example deals with arithmetic progressions.
\begin{lem}\label{lem:arithmetic_prog}
  Let $a,b\in\N$. If the set $\{an+b\colon n\in\N\}$ is a vdC set,
  then $a\mid b$.
\end{lem}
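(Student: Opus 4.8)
The plan is to prove the contrapositive: I will assume $a \nmid b$ and show that the arithmetic progression $\mathcal{H} = \{an + b \colon n \in \N\}$ fails to be a vdC set. The whole argument should reduce to a single application of Lemma~\ref{mt:infinite_intersection}, which tells us that it suffices to exhibit one modulus $q \in \N$ for which the intersection $\mathcal{H} \cap q\N$ is finite. So the only real task is to identify the right $q$.

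First I would take $q = a$ and inspect the intersection $\mathcal{H} \cap a\N$. An element $an + b$ of $\mathcal{H}$ belongs to $a\N$ exactly when $a \mid (an + b)$, and since $a \mid an$ this is equivalent to $a \mid b$. Under the standing assumption $a \nmid b$ this divisibility never holds for any $n$, so in fact $\mathcal{H} \cap a\N = \emptyset$. In particular this intersection is finite, which is all that Lemma~\ref{mt:infinite_intersection} requires.

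With such a $q$ in hand, I would invoke Lemma~\ref{mt:infinite_intersection} directly to conclude that $\mathcal{H}$ is not a vdC set. This establishes the contrapositive, and hence the lemma: if $\{an + b \colon n \in \N\}$ is a vdC set, then necessarily $a \mid b$.

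As for the main obstacle, I do not expect a genuine difficulty here, since the argument is essentially a one-line congruence computation feeding into the already-available Lemma~\ref{mt:infinite_intersection}. The only piece of insight is recognizing that the natural modulus to test against is $q = a$ itself; this choice is exactly what forces the intersection to be not merely finite but empty, which is precisely what lets the previous lemma apply.
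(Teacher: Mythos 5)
Your proof is correct and is essentially the same as the paper's: both arguments apply Lemma~\ref{mt:infinite_intersection} with the modulus $q=a$ and the observation that $an+b\equiv b\pmod a$, the only difference being that you phrase it contrapositively (empty intersection, hence not vdC) while the paper phrases it directly (vdC forces $a\mid b$ via infinitely many multiples of $a$ in $\mathcal{H}$).
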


\begin{proof}
Let $b\in\N$ and $\mathcal{H}=\{an+b\colon n\in\N\}$ be a
vdC set. Then by Lemma \ref{mt:infinite_intersection} we must have
\[an+b\equiv b\equiv 0\bmod a\quad\text{infinitely often.}\]
This implies that $a\mid b$.
\end{proof}

The sufficiency (and also the necessity) of the requirement $a\mid b$
follows from the following result of Kamae and Mend\`es-France
\cite{kamae_mendes1978:van_der_corputs} (\textit{cf.} Corollary 9 of
\cite{montgomery1994:ten_lectures_on}).
\begin{lem}
Let $P(z)\in\Z[z]$ and suppose that $P(z)\to+\infty$ as
$z\to+\infty$. Then $\mathcal{H}=\{P(n)>0\colon n\in\N\}$ is a vdC set
if and only if for every positive integer $q$ the congruence
$P(z)\equiv 0\pmod q$ has a root.
\end{lem}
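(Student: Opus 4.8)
The two implications are of very different character, so the plan is to treat them separately.

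For the necessity of the congruence condition I would argue by contraposition using Lemma~\ref{mt:infinite_intersection}. Suppose that for some modulus $q\in\N$ the congruence $P(z)\equiv 0\pmod q$ has no root. Then $q\nmid P(n)$ for every $n\in\N$, so that $\mathcal{H}\cap q\N=\emptyset$ is finite, and Lemma~\ref{mt:infinite_intersection} shows that $\mathcal{H}$ is not a vdC set. Taking the contrapositive, if $\mathcal{H}$ is a vdC set then $P(z)\equiv 0\pmod q$ must be solvable for every $q$; this is the same mechanism already used for arithmetic progressions in Lemma~\ref{lem:arithmetic_prog}.

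For the sufficiency I would pass to the dual characterization via positive measures stated above: it is enough to prove that every positive measure $\sigma$ on $\T$ with $\widehat{\sigma}(P(n))=0$ for all $n$ with $P(n)>0$ satisfies $\sigma(\{0\})=0$. The main tool will be Weyl's theorem on the equidistribution of polynomial sequences: since $P\in\Z[z]$, the averages $\frac1N\sum_{n\le N}e(P(n)\theta)$ converge as $N\to\infty$ to a limit $A(\theta)$ that vanishes at every irrational $\theta$ and equals the complete exponential sum $q^{-1}\sum_{j=0}^{q-1}e(aP(j)/q)$ at each rational $\theta=a/q$. Because $P(n)\to+\infty$, all but finitely many frequencies $P(n)$ are annihilated by $\sigma$, so pairing these averages (and, more generally, modulated averages $\frac1N\sum_{n\le N}c_n e(P(n)\theta)$ with $|c_n|=1$, all keeping the spectrum inside $\mathcal{H}$) against $\sigma$ yields identities that test the atomic part of $\sigma$ against the numbers $A(a/q)$. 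It is here that the hypothesis is decisive: solvability of $P(z)\equiv 0\pmod q$ for every $q$ is exactly what rules out the ``arithmetic'' obstructions $\sigma=q_0^{-1}\sum_{0\le k<q_0}\delta_{k/q_0}$, for which $\widehat{\sigma}(h)=\mathbf{1}_{q_0\mid h}$ vanishes on all $P(n)$ while $\sigma(\{0\})=q_0^{-1}>0$ as soon as $q_0\nmid P(n)$ for every $n$.

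Equivalently, and this is the route I would finally carry out, one applies the trigonometric polynomial criterion and constructs, for each $\varepsilon>0$, a real trigonometric polynomial $T$ with spectrum contained in $\mathcal{H}$, with $T(0)=1$ and $T\ge-\varepsilon$. The natural candidates are normalised Weyl sums $N^{-1}\sum_{n\le N}e(P(n)\theta)$ assembled into a Fej\'er-type kernel, whose frequencies automatically lie in $\{P(n)\}$; a reduction to prime power moduli via the Chinese remainder theorem then lets one insert the root of $P$ modulo each prime power that the hypothesis provides. Note that one cannot simply reduce to Furstenberg's case $g(0)=0$, since local solvability at every modulus is strictly weaker than the existence of an integer root. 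The hard part will be precisely the uniform lower bound $T\ge-\varepsilon$: bounding the negative part forces quantitative cancellation in the relevant complete and incomplete exponential sums (Weyl's inequality together with Gauss-sum estimates at the finitely many small moduli), and it is the solvability of the congruences at \emph{every} modulus that secures the constructive alignment needed simultaneously at all scales. This quantitative estimate, rather than any formal manipulation, is the heart of the Kamae--Mend\`es-France theorem.
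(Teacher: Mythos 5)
Your necessity argument is correct: if $P(z)\equiv 0\pmod q$ has no root then $\mathcal{H}\cap q\N=\emptyset$, and Lemma~\ref{mt:infinite_intersection} shows $\mathcal{H}$ is not a vdC set; this is exactly the mechanism the paper uses for Lemma~\ref{lem:arithmetic_prog}. For calibration, note that the paper does not prove the present lemma at all --- it quotes it from Kamae--Mend\`es-France and Montgomery --- but the machinery that does prove it is the paper's own Proposition~\ref{mt:sufficient_condition}, and that is the yardstick against which your sufficiency argument should be measured.

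The sufficiency half of your proposal has a genuine gap: the one idea that makes the proof work never appears. What is needed is the substitution $z\mapsto q!\,z+r_q$, where $r_q$ is a root of $P(z)\equiv 0\pmod{q!}$ supplied by the hypothesis. The polynomial $P_q(z):=P(q!\,z+r_q)$ satisfies $P_q(n)\equiv P(r_q)\equiv 0\pmod{q!}$ for every $n$, and its values lie in $\mathcal{H}$ up to finitely many terms. By Weyl's theorem the averages $\frac1N\sum_{n\le N}e\bigl(P_q(n)\theta\bigr)$ tend to $0$ at every irrational $\theta$, while at every rational $\theta$ whose denominator divides $q!$ they are identically $1$; hence their pointwise limits $g_q$ converge, as $q\to\infty$, to the indicator function of $\Q$, and two applications of dominated convergence give $\sigma(\Q)=0$, in particular $\sigma(\{0\})=0$, for any positive measure $\sigma$ whose Fourier transform vanishes on $\mathcal{H}$. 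Your version, which pairs $\sigma$ against the plain averages $\frac1N\sum_{n\le N}e\bigl(P(n)\theta\bigr)$, only yields the identity $\sigma(\{0\})=-\sum_{a/q\neq 0}A(a/q)\,\sigma(\{a/q\})$ with $A(a/q)$ complete exponential sums; this does not force $\sigma(\{0\})=0$, and checking that the hypothesis excludes the particular measures $q_0^{-1}\sum_{k}\delta_{k/q_0}$ is not a proof that \emph{every} admissible $\sigma$ is atom-free at $0$ --- that observation merely re-derives necessity. Your fallback via the trigonometric-polynomial criterion is an unexecuted plan: the bound $T\ge-\varepsilon$, which you yourself flag as the hard part, is never established, and in fact no Weyl-inequality or Gauss-sum estimates are needed once the $q!$-substitution is made --- the proof is soft. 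Finally, your remark that one cannot reduce to Furstenberg's case $P(0)=0$ is true globally but misleading here: the substitution $z\mapsto q!\,z+r_q$ is precisely that reduction performed locally at each modulus, and it is the workaround your proposal is missing; with it, the lemma becomes the case $k=m=1$, $g_1=P$, $h_n^{(q)}=P(q!\,n+r_q)$ of Proposition~\ref{mt:sufficient_condition}.
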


Now we want to establish a similar result for sets of the form
$\{ap+b\colon p\text{ prime}\}$. In this case the following result is
due to Bergelson and Lesigne
\cite{bergelson_lesigne2008:van_der_corput} which is a generalization of
the case $f(x)=x$ due to Kamae and Mend\`es-France
\cite{kamae_mendes1978:van_der_corputs}.
\begin{lem}[{\cite[Proposition
    1.22]{bergelson_lesigne2008:van_der_corput}}]\label{bl:prop1.22}
Let $f$ be a (non zero) polynomial with integer coefficients and
zero constant term. Then the sets $\{f(p-1)\colon
p\in\mathbb{P}\}$ and $\{f(p+1)\colon p\in\mathbb{P}\}$ are vdC
sets in $\Z$.
\end{lem}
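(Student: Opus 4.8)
The plan is to use the spectral characterization (the Theorem quoted as [Proposition 1.18] of Bergelson–Lesigne): to show $\mathcal{H}=\{f(p\pm1)\colon p\in\mathbb{P}\}$ is a vdC set, it suffices to produce, for every $\varepsilon>0$, a real trigonometric polynomial $P$ on $\T$ with spectrum in $\mathcal{H}$, $P(0)=1$, and $P\geq-\varepsilon$. The natural candidate arises from averaging exponential sums over primes: form
\[
P_N(\theta)=\frac{1}{\pi(N)}\sum_{p\leq N}e\bigl(f(p\mp1)\,\theta\bigr)
\]
(with the sign chosen to match the set in question), where $\pi(N)$ is the prime counting function. This has $P_N(0)=1$ and spectrum contained in $\mathcal{H}$ by construction, so the whole content lies in showing that a suitable real part of such an average is eventually bounded below by $-\varepsilon$.

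First I would pass to the real version by symmetrizing, i.e. replacing $e(f(p\mp1)\theta)$ by $\cos(2\pi f(p\mp1)\theta)$, or equivalently working with $\tfrac12(P_N(\theta)+P_N(-\theta))$, which keeps $P(0)=1$ and makes $P$ real with the required spectrum. Next, the key analytic input is an equidistribution statement: for each fixed irrational $\theta$ the sequence $(f(p-1)\theta)_p$ (resp. $(f(p+1)\theta)_p$) should be uniformly distributed mod $1$ along the primes, so that the averages $P_N(\theta)$ tend to $0$; for rational $\theta=a/q$ the averaging over primes in residue classes mod $q$ (via Dirichlet/prime-number-theorem in arithmetic progressions, using that $f$ has zero constant term so $f(p\mp1)$ is controlled modulo $q$) pins down the limit. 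The crux is that the limiting measure $\sigma$ obtained from the $P_N$ has no atom at $0$ beyond the guaranteed mass, which is exactly what the dual Theorem [1.8] demands.

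The cleanest route, and the one I would follow, is in fact to invoke the dual spectral characterization (the Theorem quoted as [Theorem 1.8]) rather than construct polynomials by hand: let $\sigma$ be any positive measure on $\T$ with $\widehat{\sigma}(f(p\pm1))=0$ for all primes $p$, and show $\sigma(\{0\})=0$. Writing $\widehat{\sigma}(h)=\int_{\T}e(-h\theta)\,d\sigma(\theta)$, the hypothesis says the Fourier coefficients of $\sigma$ vanish along the shifted-prime-polynomial values. One then averages these vanishing conditions: for a parameter $N$, consider $\frac{1}{\pi(N)}\sum_{p\leq N}\widehat{\sigma}(f(p\pm1))=0$, interchange sum and integral, and analyze the inner average $\frac{1}{\pi(N)}\sum_{p\leq N}e(-f(p\mp1)\theta)$ as $N\to\infty$ pointwise in $\theta$. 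By the equidistribution of $(f(p\mp1)\theta)_p$ this inner average converges to $0$ for $\theta\notin\Q$ and to the indicator of a divisibility condition for $\theta\in\Q$; dominated convergence then forces $\sigma$ to be supported on the rationals, after which a more refined averaging along the progressions isolates and kills the atom at $0$.

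The main obstacle, and where the real work sits, is establishing the uniform distribution of the polynomial sequence $(f(p\mp1)\theta)_p$ over the primes for irrational $\theta$, together with the correct evaluation of the prime-average at rational points. For $\deg f=1$ this is classical (Vinogradov's estimate for $\sum_{p\leq N}e(\alpha p)$ and the prime number theorem in arithmetic progressions), but for higher-degree $f$ it requires Weyl-type bounds for exponential sums $\sum_{p\leq N}e(f(p)\theta)$ over primes — precisely the Vinogradov–Hua circle-method machinery for polynomials along primes. The zero-constant-term hypothesis on $f$ is essential here: it guarantees that $f(p\mp1)\equiv f(\mp1)\pmod{q}$-type congruences behave uniformly as $p$ ranges over residue classes, so that the rational contributions assemble into exactly the atom-free conclusion required by Theorem [1.8]. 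Controlling these exponential sums uniformly enough to apply dominated convergence is the step I expect to be hardest; the reduction to the spectral/dual criterion is otherwise formal.
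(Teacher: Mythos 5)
Your overall strategy (spectral/dual characterization, averaging exponentials over primes, equidistribution at irrational points, congruence information at rational points, dominated convergence) is the right family of ideas; note that the paper itself does not prove this lemma but cites it from Bergelson--Lesigne, and the machinery the paper builds for exactly this kind of statement is Proposition~\ref{mt:sufficient_condition}, whose proof runs along these lines. However, your execution has a genuine gap at the decisive step, the treatment of rational $\theta$. If you average over \emph{all} primes $p\le N$, then for $\theta=a/q$ in lowest terms the limit of $\frac1{\pi(N)}\sum_{p\le N}e\bigl(f(p\mp1)\theta\bigr)$ is, by the prime number theorem in arithmetic progressions, the exponential sum $\frac1{\varphi(q)}\sum_{(r,q)=1}e\bigl(f(r\mp1)a/q\bigr)$ --- a complex number, \emph{not} ``the indicator of a divisibility condition'' (already for $f(x)=x$ it is a normalized Ramanujan sum $\mu(q)e(-a/q)/\varphi(q)$). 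Consequently dominated convergence only yields $\sum_{\theta\in\Q\cap[0,1)}g(\theta)\,\sigma(\{\theta\})=0$ with complex weights $g(\theta)$ and $g(0)=1$, and nothing prevents cancellation between the atom at $0$ and atoms at other rationals; no conclusion about $\sigma(\{0\})$ follows. Your intermediate claim that this ``forces $\sigma$ to be supported on the rationals'' is also false --- Lebesgue measure has $\widehat{\sigma}(h)=0$ for all $h\ne0$, so it satisfies every hypothesis while being supported on all of $\T$ --- and it is in fact the opposite of the correct intermediate goal, which is $\sigma(\Q\cap[0,1))=0$. The same defect afflicts your trigonometric-polynomial variant: the unrestricted prime average need not satisfy $P\ge-\varepsilon$, and pointwise convergence of $P_N$ to a discontinuous limit gives no uniform lower bound for any finite $N$.

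The missing idea, which is precisely the crux of the Kamae--Mend\`es France / Bergelson--Lesigne argument and of the proof of Proposition~\ref{mt:sufficient_condition}, is to introduce an auxiliary parameter $q$ and average only over primes $p\equiv\pm1\pmod{q!}$ (infinitely many by Dirichlet). Since $f$ has zero constant term, for such $p$ one has $q!\mid f(p\mp1)$; hence for every rational $\theta$ whose denominator divides $q!$, every single term $e\bigl(f(p\mp1)\theta\bigr)$ equals $1$ exactly, so the limit function $g_q$ equals $1$ at all such rationals and $0$ at irrationals (here the required equidistribution of $\bigl(f(p\mp1)\theta\bigr)_p$ must hold along primes in the progression, i.e.\ one needs Vinogradov-type estimates in arithmetic progressions, not merely over all primes). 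Letting $q\to\infty$, the functions $g_q$ converge pointwise to $1_{\Q}$, and two applications of dominated convergence give $\sigma(\Q\cap[0,1))=0$, hence $\sigma(\{0\})=0$. Equivalently, you could simply verify the hypotheses of Proposition~\ref{mt:sufficient_condition} with $k=m=1$, $g_1(n)=f(p_n\mp1)$, and $h^{(q)}_n$ running through the values $f(p\mp1)$ for primes $p\equiv\pm1\pmod{q!}$. Your unrestricted average, by contrast, exploits only a strictly weaker consequence of the hypothesis that $\widehat{\sigma}$ vanishes on the whole set, and cannot close the argument.
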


We show the converse direction.
\begin{lem}
Let $a$ and $b$ be non-zero integers. Then the set $\{ap+b\colon
p\in\mathbb{P}\}$ is a vdC set if and only if $\lvert a\rvert=\lvert
b\rvert$, \textit{i.e.} $ap+b=a(p\pm 1)$.
\end{lem}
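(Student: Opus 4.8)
The two implications will be handled separately, and since the sufficiency is immediate I expect all the work to lie in the necessity. For sufficiency I would argue as follows: if $|a|=|b|$ then $b=a$ or $b=-a$, so that $ap+b=a(p+1)$ or $ap+b=a(p-1)$. Applying Lemma~\ref{bl:prop1.22} to the monomial $f(x)=ax$, which is a non-zero integer polynomial with zero constant term, shows that both $\{a(p-1)\colon p\in\mathbb{P}\}$ and $\{a(p+1)\colon p\in\mathbb{P}\}$ are vdC sets; hence $\{ap+b\colon p\in\mathbb{P}\}$ is one as well.

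For necessity, assume $\mathcal{H}=\{ap+b\colon p\in\mathbb{P}\}$ is a vdC set; the goal is to deduce $a\mid b$ and $b\mid a$, which together force $|a|=|b|$. First I would reduce to the case $a>0$: a subset of $\Z\setminus\{0\}$ is a vdC set if and only if its reflection under $h\mapsto -h$ is, since the defining correlation conditions for $h$ and for $-h$ are complex conjugates of one another. With $a>0$ all but finitely many elements of $\mathcal{H}$ are positive, and the splitting observation used in the proof of Lemma~\ref{mt:infinite_intersection} shows that the positive tail $\mathcal{H}^+\subset\N$ is still a vdC set. Lemma~\ref{mt:infinite_intersection} then tells me that for every $q\in\N$ the set $\mathcal{H}^+\cap q\N$ is infinite, i.e. the congruence $ap+b\equiv 0\pmod q$ is solvable in primes $p$ infinitely often.

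The main step is to feed prime-power moduli into this criterion. Fix a prime $\ell$ and write $\alpha=v_\ell(a)$, $\beta=v_\ell(b)$. If $\beta<\alpha$ for some $\ell$, then choosing $q=\ell^{\beta+1}$ gives $ap+b\equiv b\not\equiv 0\pmod{\ell^{\beta+1}}$ for every $p$, so the intersection is empty, a contradiction; thus $\alpha\le\beta$ for all $\ell$, i.e. $a\mid b$. If instead $\beta>\alpha$ for some $\ell$, then choosing $q=\ell^{\alpha+1}$ and writing $a=\ell^\alpha a_1$ with $\ell\nmid a_1$, the congruence reduces to $\ell^\alpha a_1 p\equiv 0\pmod{\ell^{\alpha+1}}$, which forces $\ell\mid p$; hence $p=\ell$ is the only admissible prime and the intersection is finite, again a contradiction. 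Therefore $\beta\le\alpha$ for all $\ell$, i.e. $b\mid a$, and combining the two divisibilities gives $|a|=|b|$.

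I expect the main obstacle to be organising the necessity argument so that each direction of divisibility is caught by the correct prime-power modulus: the two sub-cases $\beta<\alpha$ and $\beta>\alpha$ behave asymmetrically, producing an empty intersection in the first case and an intersection meeting only $p=\ell$ in the second. Everything else, namely the sign reduction and the discarding of the finite non-positive part, is routine, and notably no analytic input such as Dirichlet's theorem is needed, the entire obstruction being purely congruential and detected by the finiteness criterion of Lemma~\ref{mt:infinite_intersection}.
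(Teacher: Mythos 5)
Your proof is correct and follows essentially the same route as the paper: sufficiency via Lemma~\ref{bl:prop1.22}, and necessity by applying the contrapositive of Lemma~\ref{mt:infinite_intersection} with suitable moduli to force both divisibilities $a\mid b$ and $b\mid a$. The only differences are in execution, not in substance: you work prime by prime with prime-power moduli where the paper simply takes $q=a$ (giving $a\mid b$) and $q=b$ (giving $b\mid a$, since $\gcd(p,b)>1$ holds for only finitely many primes), and you spell out the sign and positivity reductions that the paper leaves implicit.
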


\begin{proof}
It is clear from Lemma \ref{bl:prop1.22} that $\{ap+b\colon
p\in\mathbb{P}\}$ is a vdC set if $\lvert a\rvert=\lvert b\rvert$.

On the contrary a combination of Lemma \ref{mt:infinite_intersection}
and Lemma \ref{lem:arithmetic_prog} yields that $a\mid b$. Now we
consider the sequence modulo $b$. Then by Lemma
\ref{mt:infinite_intersection} we get that
\[ap+b\equiv ap\equiv 0\bmod b\quad\text{infinitely often.}\] Since $(p,b)>1$ only
holds for finitely many primes $p$ we must have $b\mid a$. Combining
these two requirements yields $\lvert a\rvert=\lvert b\rvert$.
\end{proof}

\section{A sufficient condition}

In this section we want to formulate a general sufficient
condition which provides us with a tool to show for plenty of
different examples that they generate a vdC set. This is a
generalization of the conditions of Kamae and Mend\`es-France
\cite{kamae_mendes1978:van_der_corputs} and Bergelson and Lesigne
\cite{bergelson_lesigne2008:van_der_corput}. Before stating the
condition we need an auxiliary lemma.

\begin{lem}[{\cite[Corollary
    1.15]{bergelson_lesigne2008:van_der_corput}}]
\label{bl:lem_linear_algebra}
  Let $d$ and $e$ be positive integers, and let $L$ be a linear
  transformation from $\Z^d$ into $\Z^e$ (represented by an $e\times
  d$ matrix with integer entries). Then the following assertions hold:
\begin{enumerate}
\item If $D$ is a vdC set in $\Z^d$ and if $0\not\in L(D)$, then
  $L(D)$ is a vdC set in $\Z^e$.
\item Let $D\in\Z^d$. If the linear map $L$ is one-to-one, and if
  $L(D)$ is a vdC set in $\Z^e$, then $D$ is a vdC set in $\Z^d$.
\end{enumerate}
\end{lem}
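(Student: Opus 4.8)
The plan is to prove the two assertions using the two dual characterizations recorded above: the spectral one via real trigonometric polynomials for part~(1), and the one via positive measures for part~(2). The single algebraic fact driving both directions is the adjunction between $L$ and its transpose $L^\top$ (a $d\times e$ integer matrix): for $\mathbf h\in\Z^d$ and $\mathbf y\in\R^e$ one has $(L\mathbf h)\cdot\mathbf y=\mathbf h\cdot(L^\top\mathbf y)$. Since $L^\top$ maps $\Z^e$ into $\Z^d$, it descends to a continuous homomorphism $\phi\colon\T^e\to\T^d$, $\phi(\mathbf y)=L^\top\mathbf y$, and this map is the transport mechanism in both parts.

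For~(1), fix $\varepsilon>0$ and, using that $D$ is a vdC set, choose by the spectral characterization a real trigonometric polynomial $P(\mathbf x)=\sum_{\mathbf h\in D}c_{\mathbf h}e(\mathbf h\cdot\mathbf x)$ on $\T^d$ with spectrum in $D$, with $P(0)=1$ and $P\ge-\varepsilon$. I would then set $Q:=P\circ\phi$ on $\T^e$. The adjunction gives $Q(\mathbf y)=\sum_{\mathbf h\in D}c_{\mathbf h}e((L\mathbf h)\cdot\mathbf y)$, so $Q$ is again a real trigonometric polynomial with $Q(0)=P(0)=1$, and, since $\phi$ takes values in $\T^d$, with $Q\ge-\varepsilon$ everywhere. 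Its frequencies lie in $\{L\mathbf h:\mathbf h\in D\}\subseteq L(D)$; the hypothesis $0\notin L(D)$ guarantees that no nonzero frequency of $P$ is sent to $0$, so the spectrum of $Q$ stays inside $L(D)\subset\Z^e\setminus\{0\}$. By the spectral characterization this shows $L(D)$ is a vdC set. Note that this direction uses only $0\notin L(D)$ and not injectivity.

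For~(2), I would argue on the measure side. Let $\sigma\ge0$ be a measure on $\T^d$ with $\widehat\sigma(\mathbf h)=0$ for all $\mathbf h\in D$; by the measure characterization it suffices to show $\sigma(\{0\})=0$. Because $L$ is one-to-one it has rank $d$, hence $L^\top$ has rank $d$ and $\phi\colon\T^e\to\T^d$ is surjective. I would lift $\sigma$ through $\phi$: choosing a Borel section $s\colon\T^d\to\T^e$ of $\phi$ with $s(0)=0$ (take a Borel fundamental domain for the closed subgroup $\ker\phi$ that contains $0$) and setting $\tau:=s_*\sigma$, a positive measure on $\T^e$ with $\phi_*\tau=\sigma$. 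The adjunction yields, for $\mathbf h\in\Z^d$, $\widehat\tau(L\mathbf h)=\int_{\T^e}e(-(L\mathbf h)\cdot\mathbf y)\,d\tau(\mathbf y)=\int_{\T^e}e(-\mathbf h\cdot\phi(\mathbf y))\,d\tau(\mathbf y)=\widehat{\phi_*\tau}(\mathbf h)=\widehat\sigma(\mathbf h)$, so $\widehat\tau$ vanishes on $L(D)$. Since $L(D)$ is a vdC set, the measure characterization forces $\tau(\{0\})=0$; but $s(0)=0$ gives $\tau(\{0\})=\sigma(s^{-1}\{0\})\ge\sigma(\{0\})$, whence $\sigma(\{0\})=0$, as required.

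The main obstacle is the lifting step in part~(2): producing a positive measure on $\T^e$ that reproduces $\sigma$ on the relevant frequencies while retaining its atom at the origin. The crucial observations are that injectivity of $L$ is exactly what makes $\phi$ surjective, so that a measurable section exists, and that normalizing the section by $s(0)=0$ is what carries the atom of $\sigma$ at $0$ to an atom of $\tau$ at $0$, allowing the vdC property of $L(D)$ to be invoked. Verifying the measurability of the section and the Fourier identity $\widehat\tau(L\mathbf h)=\widehat\sigma(\mathbf h)$ are the only technical points, both routine once the construction is in place.
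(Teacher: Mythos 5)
The paper never proves this lemma itself: it is quoted directly as Corollary 1.15 of Bergelson and Lesigne, so there is no internal proof to compare against. Your argument is correct, and it has the nice feature of being self-contained relative to the survey, using only the two characterizations the paper does record (the trigonometric-polynomial criterion and the positive-measure criterion). Part (1) is the standard transport argument: composing a polynomial witness $P$ with the homomorphism $\phi\colon\T^e\to\T^d$ dual to $L$ preserves realness, the normalization $Q(0)=1$, and the bound $Q\ge-\varepsilon$, and sends the spectrum into $L(D)$, the hypothesis $0\notin L(D)$ being exactly what keeps the new spectrum inside $\Z^e\setminus\{0\}$ (coalescence of frequencies when $L$ is not injective on $D$ is harmless, since only containment of the spectrum in $L(D)$ is needed). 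Part (2) is also sound: injectivity of $L$ over $\Z$ forces rank $d$, hence surjectivity of $\phi$; the identity $\widehat{\tau}(L\mathbf{h})=\widehat{\sigma}(\mathbf{h})$ for $\tau=s_*\sigma$ follows from $\phi\circ s=\mathrm{id}$; and normalizing $s(0)=0$ transfers the atom at the origin. One comment worth making: the Borel section is the only heavy technical ingredient in your write-up, and you can avoid it entirely by running part (2) on the polynomial side, dually to your part (1). Given a witness $Q=\sum a_{L\mathbf{h}}\,e\bigl((L\mathbf{h})\cdot\mathbf{y}\bigr)$ for $L(D)$ (the indexing by $\mathbf{h}\in D$ is unambiguous precisely because $L$ is one-to-one), set $P(\mathbf{x}):=\sum a_{L\mathbf{h}}\,e(\mathbf{h}\cdot\mathbf{x})$; then $P(0)=Q(0)=1$, the spectrum of $P$ lies in $D$, and for every $\mathbf{x}\in\T^d$ surjectivity of $\phi$ yields $\mathbf{y}$ with $\phi(\mathbf{y})=\mathbf{x}$, whence $P(\mathbf{x})=Q(\mathbf{y})\ge-\varepsilon$. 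This gives both parts by purely finite, algebraic manipulations of trigonometric polynomials, with no descriptive-set-theoretic input.
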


Our main tool is the following general result. Applications
are given in the next section.

\begin{prop}\label{mt:sufficient_condition}
Let $g_1,\ldots,g_k\colon\N\to\Z$ be arithmetic functions. Suppose
that $g_{i_1},\ldots,g_{i_m}$ is a basis of the $\Q$-vector space
$\mathrm{span}(g_1,\ldots,g_k)$. For each $q\in\N$, we introduce
\[D_q:=\left\{(g_{i_1}(n),\ldots,g_{i_m}(n)\colon n\in\N\text{ and $q!\mid
  g_{i_j}(n)$ for all $j=1,\ldots,m$}\right\}.\]
Suppose further that, for every $q$, there exists a sequence
$(h^{(q)}_n)_{n\in\N}$ in $D_q$ such that, for all
$\mathbf{x}=(x_1,\ldots,x_m)\in\R^m\setminus\Q^m$, the sequence
$(h^{(q)}_n\cdot\mathbf{x})_{n\in\N}$ is uniformly distributed mod
$1$. Then
\[\widetilde{D}:=\{(g_1(n),\ldots,g_k(n))\colon n\in\N\}\in\Z^k\]
is a vdC set.
\end{prop}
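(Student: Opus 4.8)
The plan is to argue in two stages: a linear-algebraic reduction to the $m$-dimensional coordinate set of the chosen basis, followed by a Fourier-analytic argument based on the spectral (measure-theoretic) characterization of vdC sets. First I would pass from $\widetilde D$ to the set $D:=\{(g_{i_1}(n),\ldots,g_{i_m}(n))\colon n\in\N\}\subset\Z^m$. Since $g_{i_1},\ldots,g_{i_m}$ is a basis of $\mathrm{span}(g_1,\ldots,g_k)$, we may write $g_j=\sum_{l=1}^m c_{jl}g_{i_l}$ with $c_{jl}\in\Q$; choosing a common denominator $Q\in\N$ gives an integer matrix $M=(Qc_{jl})$ and hence a linear map $L\colon\Z^m\to\Z^k$ with $L(D)=Q\widetilde D$. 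The rows of $M$ indexed by $i_1,\ldots,i_m$ form $Q$ times the identity, so $L$ is one-to-one. Assuming $0\notin\widetilde D$ (so that $0\notin L(D)$), once we know that $D$ is a vdC set, part~(1) of Lemma~\ref{bl:lem_linear_algebra} shows $Q\widetilde D=L(D)$ is a vdC set, and applying part~(2) to the injective scaling map $\mathbf v\mapsto Q\mathbf v$ then shows $\widetilde D$ itself is a vdC set. Thus everything reduces to proving that $D\subset\Z^m$ is a vdC set.

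For this I would use the characterization via positive measures (Theorem~1.8 of \cite{bergelson_lesigne2008:van_der_corput}): it suffices to show that every positive measure $\sigma$ on $\T^m$ with $\widehat\sigma(\mathbf h)=0$ for all $\mathbf h\in D$ satisfies $\sigma(\{0\})=0$. Since $D_q\subseteq D$ for every $q$, we have $\widehat\sigma(h^{(q)}_n)=0$ for all $n$, so, writing $e(t)=e^{2\pi i t}$ and $f^{(q)}_N(\mathbf t)=\frac1N\sum_{n=1}^N e(-h^{(q)}_n\cdot\mathbf t)$, averaging gives
\[
0=\frac1N\sum_{n=1}^N\widehat\sigma(h^{(q)}_n)=\int_{\T^m}f^{(q)}_N(\mathbf t)\,d\sigma(\mathbf t)\qquad(N\in\N).
\]

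The decisive point is the pointwise behaviour of $f^{(q)}_N$ as $N\to\infty$. On the finite subgroup $R_q:=\tfrac1{q!}\Z^m/\Z^m$ the divisibility $q!\mid h^{(q)}_n$ forces $e(-h^{(q)}_n\cdot\mathbf t)=1$, so $f^{(q)}_N\equiv1$ there; for any $\mathbf t$ with at least one irrational coordinate the hypothesis that $(h^{(q)}_n\cdot\mathbf t)_n$ is u.d.\ mod~$1$ gives, by Weyl's criterion, $f^{(q)}_N(\mathbf t)\to0$. Splitting $\T^m=R_q\sqcup B_q\sqcup I$, where $I$ is the set of points having an irrational coordinate and $B_q=(\Q/\Z)^m\setminus R_q$, and letting $N\to\infty$ in the displayed identity, dominated convergence gives $\int_I f^{(q)}_N\,d\sigma\to0$ while $\int_{R_q}f^{(q)}_N\,d\sigma=\sigma(R_q)$; since $\lvert f^{(q)}_N\rvert\le1$ the remaining integral over $B_q$ is bounded in modulus by $\sigma(B_q)$, whence $\sigma(R_q)\le\sigma(B_q)$. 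As $\{0\}\subseteq R_q$ this yields $\sigma(\{0\})\le\sigma(B_q)$, and letting $q\to\infty$ (so $R_q\uparrow(\Q/\Z)^m$ and $B_q\downarrow\emptyset$) forces $\sigma(\{0\})=0$.

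The main obstacle is precisely the rational points of $\T^m$: the u.d.\ hypothesis only controls $f^{(q)}_N$ on $\R^m\setminus\Q^m$, and on the remaining rational points the averages need not even converge. The factorial divisibility encoded in $D_q$ is what saves the argument --- it pins $f^{(q)}_N$ to the value $1$ on the growing subgroups $R_q$, so that the uncontrolled ``bad'' rational set $B_q$ has $\sigma$-measure tending to $0$ as $q$ grows, and no direct analysis of the averages on $B_q$ is required.
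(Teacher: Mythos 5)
Your proposal is correct and follows essentially the same route as the paper: the identical two-stage structure of a linear-algebra reduction via Lemma~\ref{bl:lem_linear_algebra} (applied twice, your single common denominator $Q$ playing the role of the paper's integers $a_1,\ldots,a_k$), followed by the spectral/measure characterization with averages along the $q!$-divisible sequence $(h^{(q)}_n)$. The only difference is bookkeeping at the uncontrolled rational points: the paper extracts a pointwise convergent subsequence of the averages and applies dominated convergence twice to get $\sigma(\Q^m)=0$, whereas you use the trivial bound on $B_q$ together with $\sigma(R_q)\le\sigma(B_q)$ and continuity from above --- a purely technical (and arguably cleaner) variation of the same idea.
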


\begin{proof}
We first show that the set
\[D:=\{(g_{i_1}(n),\ldots,g_{i_m}(n))\colon n\in\N\}\]
is a vdC set in $\Z^m$. For $q,N\in\N$ we define a family of trigonometric
polynomials
\[P_{q,N}:=\frac1N\sum_{n=1}^Ne\left(h_n^{(q)}\cdot\mathbf{x}\right).\]
By hypothesis, $\lim_{N\to\infty}P_{q,N}(x)=0$ for
$x\not\in\Q^m$. For fixed $q$ there exists a subsequence $(P_{q,N'})$
which converges pointwise to a function $g_q$. Since $g_q(x)=1$
(for $x\in\Q^m$ and $q$ sufficiently large) and $g_q(x)=0$ (for
$x\not\in\Q^m$), the sequence $(g_q)$ is pointwise convergent to the indicator
function of $\Q^m$. For a positive measure $\sigma$ on the
$m$-dimensional torus with vanishing Fourier transform
$\widehat{\sigma}$ on $D$, we have $\int P_{q,N}\mathrm{d}\sigma=0$
for all $q,N$. Thus $\sigma(\Q^m)=0$ follows from the dominating
convergence theorem, obviously $\sigma(\{0,0,\ldots,0\})=0$, and thus $D$ is a vdC
set.

In order to prove that $\widetilde{D}$ is a vdC set we apply Lemma
\ref{bl:lem_linear_algebra} twice. Since $g_{i_1},\ldots,g_{i_m}$ is a
base of $\mathrm{span}(g_1,\ldots,g_k)$, we can write each $g_j$ as a
linear combination (with rational coefficients) of
$g_{i_1},\ldots,g_{i_m}$. Multiplying with the common denominator of
the coefficients yields
\[a_jg_j=b_{j,1}g_{i_1}+\cdots+b_{j,m}g_{i_m}\]
for $j=1,\ldots,k$ and certain $a_j,b_{j,\ell}\in\Z$. Considering the
transformation $L\colon\Z^m\to\Z^k$ given by the matrix $(b_{j,\ell})$
and applying part (1) of Lemma \ref{bl:lem_linear_algebra} shows that
\[\{(a_1g_1(n),\ldots,a_kg_k(n))\colon n\in\N\}\]
is a vdC set for certain integers $a_1,\ldots,a_k$.

Now consider the transformation $\widetilde{L}\colon\Z^k\to\Z^k$ given by
the $k\times k$ diagonal matrix with entries $a_1,\ldots,a_k$ in the
diagonal. Then by part (2) of Lemma \ref{bl:lem_linear_algebra} also
$\widetilde{D}$ is a vdC set and the proposition is proved.
\end{proof}

\section{Various examples and applications to additive problems}

In this section we consider multidimensional variants of prime
powers, entire functions and $x^\alpha\log^\beta x$ sequences.

\subsection{Prime powers}

In a recent paper the authors together with Bergelson, Kolesnik and
Son
\cite{bergelson_kolesnik_madritsch+2014:uniform_distribution_prime}
consider sets of the form
\[\{(\alpha_1(p_n\pm1)^{\theta_1},\ldots,\alpha_k(p_n\pm1)^{\theta_k})\colon
n\in\N\},\] where $\alpha_i,\beta_i\in\R$ and $p_n\in\mathcal{P} $
runs over all prime numbers. These sets are vdC, however, we
missed the treatment of a special case in the proof. In
particular, if for some $i\neq j$ the exponents satisfy
$\theta_i=\theta_j=:\theta$, then the vector
$(p_n^{\theta},p_n^{\theta})$ is not uniformly distributed mod 1.

Here we close this gap.
\begin{thm}
If $\alpha_i$ are positive integers and $\beta_i$ are positive and non-integers, then
$$ D_1 = \{ \left(  (p-1)^{\alpha_1}, \cdots , (p-1)^{\alpha_k}, [(p-1)^{\beta_1}], \cdots , [(p-1)^{\beta_\ell}]  \right) | \, p \in\mathcal{P} \},$$
and
$$ D_2 = \{ \left(  (p+1)^{\alpha_1}, \cdots , (p+1)^{\alpha_k}, [(p+1)^{\beta_1}], \cdots , [(p+1)^{\beta_\ell}]  \right) | \, p \in \mathcal{P} \}$$
are vdC sets in $\Z^{k+\ell}$.
\end{thm}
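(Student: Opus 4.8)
The plan is to apply Proposition \ref{mt:sufficient_condition} with the arithmetic functions indexed by primes. Concretely, after reindexing the primes as $p_n$, I set $g_i(n) = (p_n-1)^{\alpha_i}$ for $i=1,\ldots,k$ and $g_{k+j}(n) = \lfloor (p_n-1)^{\beta_j}\rfloor$ for $j=1,\ldots,\ell$ (and analogously with $p_n+1$ for $D_2$). The first task is to identify a basis of the $\Q$-vector space spanned by these functions. Since the $\alpha_i$ are distinct positive integers and the $\beta_j$ are distinct positive non-integers, the functions $(p_n-1)^{\alpha_i}$ and the integer-part functions $\lfloor(p_n-1)^{\beta_j}\rfloor$ are $\Q$-linearly independent, so the whole family is its own basis; thus $m = k+\ell$ and $D = \widetilde D$, and the linear-algebra step of the proposition is trivial here.

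The substantive work is verifying the distribution hypothesis of the proposition: for each $q$, I must exhibit a sequence $(h_n^{(q)})$ lying in $D_q$ (that is, with every coordinate divisible by $q!$) such that $(h_n^{(q)}\cdot\mathbf x)$ is u.d.~mod $1$ for all $\mathbf{x}\in\R^m\setminus\Q^m$. The natural choice is to take the subsequence of primes $p$ with $p\equiv 1\pmod{M}$ for a suitable modulus $M=M(q)$ (using Dirichlet's theorem to guarantee infinitely many such primes), so that $(p-1)^{\alpha_i}$ and $\lfloor(p-1)^{\beta_j}\rfloor$ become divisible by $q!$; I would verify the divisibility of the integer-part coordinates separately, since $q!\mid\lfloor(p-1)^{\beta_j}\rfloor$ is not automatic and needs $M$ chosen so the $\beta_j$-power's integer part inherits the congruence. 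The uniform distribution of $h_n^{(q)}\cdot\mathbf x = \sum_i x_i (p-1)^{\alpha_i} + \sum_j x_{k+j}\lfloor(p-1)^{\beta_j}\rfloor$ along this restricted set of primes, for $\mathbf{x}\notin\Q^m$, is then the heart of the matter.

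The main obstacle is precisely this uniform-distribution statement, and it splits into two regimes. When the $x_{k+j}$ multiplying the non-integer-power coordinates are not all zero, the function $\sum_j x_{k+j}(p-1)^{\beta_j}$ grows like a non-integer power of $p$ and one invokes Weyl-type exponential-sum estimates over primes for $x^\beta$ with $\beta\notin\Z$ (the Nakai--Shiokawa / van der Corput--Vinogradov machinery), noting that replacing $(p-1)^{\beta_j}$ by its integer part shifts the exponential sum by a bounded amount and does not affect equidistribution. When all $x_{k+j}=0$, the linear form reduces to $\sum_i x_i (p-1)^{\alpha_i}$, a polynomial in $p$ with real coefficients not all rational; here equidistribution along primes follows from Weyl sums for polynomials over primes (Vinogradov's method / the Hua inequality), the case $\mathbf{x}\notin\Q^m$ ensuring at least one irrational coefficient drives the sum to zero. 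I would organize these as two lemmas and combine them to conclude that $h_n^{(q)}\cdot\mathbf x$ is u.d.~mod $1$ in all cases, after which Proposition \ref{mt:sufficient_condition} delivers that $D_1$ and $D_2$ are vdC sets in $\Z^{k+\ell}$.
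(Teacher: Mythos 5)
Your proposal has two genuine gaps, and the first one concerns precisely the point of the theorem. The statement does \emph{not} assume the exponents are distinct; this theorem exists to close a gap in \cite{bergelson_kolesnik_madritsch+2014:uniform_distribution_prime}, where exactly the case of repeated exponents $\theta_i=\theta_j=\theta$ was missed (there the vector $(p^{\theta},p^{\theta})$ fails to be u.d.~mod $1$). By writing ``since the $\alpha_i$ are distinct positive integers and the $\beta_j$ are distinct positive non-integers,'' you assume away the only case that needs new treatment. When exponents repeat, the family $g_1,\ldots,g_{k+\ell}$ is not $\Q$-linearly independent, one has $m<k+\ell$ and $D\neq\widetilde{D}$, and the linear-algebra reduction inside Proposition \ref{mt:sufficient_condition} (via Lemma \ref{bl:lem_linear_algebra}) is the substantive step, not a triviality. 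The paper's own proof is exactly this reduction: since $x^{\theta_1}$ and $x^{\theta_2}$ are $\Q$-linearly dependent for all $x\in\Z$ if and only if $\theta_1=\theta_2$, Proposition \ref{mt:sufficient_condition} reduces the theorem to the case of pairwise distinct exponents, and that case is then quoted from Theorem 4.1 of \cite{bergelson_kolesnik_madritsch+2014:uniform_distribution_prime}.

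The second gap is your mechanism for landing in $D_q$. Restricting to primes $p\equiv 1\pmod{q!}$ does force $q!\mid (p-1)^{\alpha_i}$, but no choice of modulus $M$ can force $q!\mid\lfloor(p-1)^{\beta_j}\rfloor$ for non-integer $\beta_j$: along any progression $p-1=Mt$, the numbers $(Mt)^{\beta_j}/q!$ are equidistributed mod $1$ (even restricted to primes), so $\lfloor(p-1)^{\beta_j}\rfloor$ occupies every residue class mod $q!$ with positive proportion, and divisibility can never be ``inherited'' from a congruence on $p$. The correct device --- the one used in the proof of Theorem \ref{thm:entire_functions_vdC} in this paper and in \cite{bergelson_kolesnik_madritsch+2014:uniform_distribution_prime} --- is to \emph{select} the subsequence of primes for which the required divisibilities happen to hold, note that it has positive relative density (by the same equidistribution statement), and then prove uniform distribution of $h_n^{(q)}\cdot\mathbf{x}$ along that selected subsequence. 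Your Weyl-sum sketch for the distinct-exponent case is, in substance, a re-derivation of Theorem 4.1 of the earlier paper, which the paper simply cites; but without the reduction from repeated to distinct exponents, and with the flawed divisibility mechanism, the argument as proposed does not prove the stated theorem.
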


\begin{proof}
Since $x^{\theta_1}$ and $x^{\theta_2}$ are $\Q$-linear dependent
for all $x\in\Z$ if and only if $\theta_1=\theta_2$, an
application of Proposition \ref{mt:sufficient_condition} yields
that it suffices to consider the case where all exponents are
different. However, this follows by the same arguments as in the
proof of Theorem 4.1 in
\cite{bergelson_kolesnik_madritsch+2014:uniform_distribution_prime}.
\end{proof}

\subsection{Entire functions}

In this section we consider entire functions of bounded logarithmic
order. We fix a transcendental entire function $f$ and denote by
$S(r):=\max_{\lvert z\rvert\leq r}\lvert f(z)\rvert$. Then we call
$\lambda$ the logarithmic order of $f$ if
\[\limsup_{r\to\infty}\frac{\log S(r)}{\log r}=\lambda.\]

The central tool is the following result of Baker
\cite{baker1984:entire_functions_and}.

\begin{thm}[{\cite[Theorem 2]{baker1984:entire_functions_and}}]\label{baker:uniform_distribution_of_entire_functions}
Let $f$ be a transcendental entire function of logarithmic order
$1<\lambda<\frac43$. Then the sequence
\[\left(f(p_n)\right)_{n\geq1}\]
is uniformly distributed mod $1$.
\end{thm}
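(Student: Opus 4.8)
\emph{The statement is Baker's theorem, and the natural approach combines Weyl's criterion with a decomposition of the sum over primes and van der Corput's method for exponential sums.} The plan is to fix a nonzero integer $h$ and to show that the Weyl sum $\sum_{p\le N}e(hf(p))$ is $o(\pi(N))$, where $e(t)=e^{2\pi it}$ and $\pi(N)$ counts primes up to $N$; by Weyl's criterion (in the form noted after \eqref{ud}) this is equivalent to the asserted uniform distribution of $(f(p_n))_{n\ge1}$. The only analytic input about $f$ is a collection of derivative estimates on the real axis. Writing $S(r)=\max_{|z|\le r}|f(z)|$, the logarithmic order hypothesis gives $\log S(r)=(\log r)^{\lambda+o(1)}$, and Cauchy's integral formula on a circle $|z-x|=\rho$ of optimized radius yields, for each fixed $j$ and large real $x$, a bound of the shape $|f^{(j)}(x)|\ll_j S(2x)\,x^{-j}(\log x)^{j(\lambda-1)}$. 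In particular, since $\lambda>1$, the function $f$ grows faster than any fixed power of $x$ yet slower than any exponential, and the successive derivatives decay by a factor close to $x^{-1}(\log x)^{\lambda-1}$ at each step.

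First I would remove the restriction to primes by Vaughan's identity, which splits $\sum_{p\le N}e(hf(p))$ into Type~I sums $\sum_m a_m\sum_n e(hf(mn))$, with $a_m$ divisor-bounded and $m$ confined to a controlled range, and Type~II bilinear sums $\sum_m\sum_n b_mc_n\,e(hf(mn))$. In both cases the problem reduces to estimating one-dimensional exponential sums in the variable $n$ with phase $hf(mn)$, together with their Weyl-differenced relatives $e\bigl(h(f(m(n+r))-f(mn))\bigr)$ arising after an application of Cauchy--Schwarz to the Type~II sums.

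These exponential sums would then be treated by van der Corput's method, alternating the differencing ($A$-process), which lowers the effective order of oscillation, with the Poisson summation ($B$-process), which exploits the monotonicity of the derivative of $hf$. The derivative estimates above feed directly into both processes, and iterating them produces a saving over the trivial bound. I expect the main obstacle to be the quantitative balancing of this iteration: because $f$ is super-polynomial, a single derivative test does not suffice, and one must choose the number of differencing steps, the splitting points in Vaughan's identity, and the dyadic ranges of $m$ and $n$ as functions of $N$ so that the Type~I and Type~II contributions are simultaneously $o(\pi(N))$. It is precisely here that the hypothesis $1<\lambda<\tfrac43$ enters: the lower bound $\lambda>1$ guarantees genuine oscillation, since the phase is nonstationary on the relevant scales, while the upper bound $\lambda<\tfrac43$ is the exact threshold at which the exponents produced by the combined $A$- and $B$-processes still close to give a net saving. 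Summing the resulting estimates over the dyadic ranges and letting $N\to\infty$ then yields $\sum_{p\le N}e(hf(p))=o(\pi(N))$ and completes the proof.
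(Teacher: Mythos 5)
You should first note that the paper does not prove this statement at all: it is quoted verbatim from Baker's paper (his Theorem 2) and used as a black box, so there is no in-paper argument to compare against; your proposal has to be judged as a reconstruction of Baker's proof. Its outer frame (Weyl's criterion, reduction of the sum over primes to Type I and Type II sums via a Vaughan/Vinogradov decomposition) is indeed the right skeleton. But there are two genuine gaps, and the second is fatal to the engine you propose. The first gap: the only analytic input you extract from entirety is an \emph{upper} bound $|f^{(j)}(x)|\ll_j S(2x)x^{-j}(\log x)^{j(\lambda-1)}$ via Cauchy's formula. Every step of the van der Corput machinery (and of any derivative test or stationary-phase argument) also requires \emph{lower} bounds and monotonicity for the relevant derivatives on the ranges where it is applied. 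Your claim that ``$\lambda>1$ guarantees genuine oscillation, since the phase is nonstationary'' is exactly the point that does not follow: $S(r)$ controls $f$ from above on circles and says nothing pointwise from below on the real axis, so nothing you have written excludes $f'$ being tiny on long real intervals, in which case no cancellation can be extracted. Supplying these two-sided bounds is the actual analytic heart of Baker's paper: by Jensen's formula an entire function of logarithmic order $\lambda$ has only $O((\log r)^{\lambda})$ zeros in $|z|\le r$, the same holds for each derivative, and combining this with minimum-modulus-type estimates gives $|f^{(j)}(x)|\asymp f(x)\left((\log x)^{\lambda-1}/x\right)^{j}$ outside small exceptional sets. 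Without this ingredient the sketch produces no bound at all.

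The second gap concerns the exponential-sum core. Iterating the $A$- and $B$-processes provably cannot succeed here, for \emph{any} $\lambda>1$. The phase has size $F=\exp\left((\log N)^{\lambda+o(1)}\right)$ on $[N,2N]$; an exponent pair $(\kappa,\mu)$ gives the bound $(F/N)^{\kappa}N^{\mu}$, which is $o(N)$ only if $\kappa\left((\log N)^{\lambda-1}-1\right)<1-\mu$. Under the $A$-process the ratio $(1-\mu)/\kappa$ increases by exactly $1$ per step, so one would need $k\asymp(\log N)^{\lambda-1}$ iterations; but then $\kappa_k\asymp2^{-k}$, and the best resulting bound is $N\exp\left(-c\,2^{-k}(\log N)^{\lambda}\right)$, whose exponent tends to $0$, i.e.\ the bound is asymptotically trivial. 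The $B$-process cannot repair this because $F\gg N^{2}$, so Poisson summation replaces the sum by a longer one. In particular, $\tfrac43$ is not ``the exact threshold at which the $A$- and $B$-processes close'': no combination of them closes at all. What is actually needed, and what Baker uses, is Vinogradov's mean value method: by Taylor expansion the phase on $[N,2N]$ behaves like a polynomial of slowly growing degree $D\asymp(\log N)^{\lambda-1}$, and the mean value method yields a saving of the shape $\exp\left(-c\log N/D^{2}\right)=\exp\left(-c(\log N)^{3-2\lambda}\right)$, which is nontrivial precisely when $\lambda<\tfrac32$; the additional losses coming from the bilinear (Type II) sums over primes are what cut the admissible range to $\lambda<\tfrac43$ in Baker's Theorem 2. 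So keep your Vaughan decomposition, but the one-dimensional sums must be treated by Vinogradov's method, after first establishing the derivative lower bounds above.
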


Our second example of a class of vdC sets is the following.
\begin{thm}\label{thm:entire_functions_vdC}
Let $f_1,\ldots,f_k$ be entire functions with distinct logarithmic orders
$1<\lambda_1,\lambda_2,\ldots,\lambda_k<\frac43$, respectively.
Then the set
\[D:=\{(\lfloor f_1(p_n)\rfloor,\ldots,\lfloor f_k(p_n)\rfloor)\colon
n\in\N\}\]
is a vdC set.
\end{thm}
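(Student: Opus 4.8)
The plan is to apply Proposition~\ref{mt:sufficient_condition} with the arithmetic functions $g_i(n) := \lfloor f_i(p_n)\rfloor$ for $i = 1,\ldots,k$, reducing the verification of the vdC property to the construction of a suitable sequence $(h^{(q)}_n)$ inside each set $D_q$ whose dot products equidistribute. First I would check the $\Q$-linear independence hypothesis: since the $f_i$ have \emph{distinct} logarithmic orders $\lambda_1,\ldots,\lambda_k$, the functions $f_1,\ldots,f_k$ (and hence their integer parts, up to the bounded error introduced by the floor) are $\Q$-linearly independent, so the whole family is its own basis and $m = k$. This means $D_q$ is simply the set of $n$ for which $q! \mid \lfloor f_i(p_n)\rfloor$ simultaneously for all $i$, and we are free to take $(h^{(q)}_n)$ to be an enumeration along a subsequence of primes constrained to lie in $D_q$.

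The heart of the argument is then to produce, for each fixed $q$, a sequence $(h^{(q)}_n)$ in $D_q$ such that $(h^{(q)}_n \cdot \mathbf{x})_{n}$ is u.d.~mod $1$ for every $\mathbf{x} \in \R^k \setminus \Q^k$. For this I would invoke Baker's Theorem~\ref{baker:uniform_distribution_of_entire_functions}: for any $\mathbf{x} = (x_1,\ldots,x_k) \notin \Q^k$, at least one $x_i$ is irrational, so the single entire function $F(z) := x_1 f_1(z) + \cdots + x_k f_k(z)$ is transcendental with logarithmic order $\max_i \lambda_i < \tfrac43$ (the dominant term dictates the logarithmic order), and hence $(F(p_n))_n = (\sum_i x_i f_i(p_n))_n$ is u.d.~mod $1$. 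Replacing $f_i(p_n)$ by $\lfloor f_i(p_n)\rfloor$ changes the dot product by $\sum_i x_i \{f_i(p_n)\}$; this bounded perturbation does not destroy uniform distribution when the $x_i$ are handled carefully, and in the purely irrational directions one argues directly via Weyl sums that the floor corrections are negligible. This furnishes the equidistribution of $(h^{(q)}_n \cdot \mathbf{x})$ along the full sequence of primes, which in particular lies in $D_q$ once we restrict to the divisibility constraint.

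The main obstacle is exactly the divisibility restriction defining $D_q$: Baker's theorem gives equidistribution of $(F(p_n))_n$ over \emph{all} primes, but we need the sequence $(h^{(q)}_n)$ to live inside $D_q$, i.e.\ restricted to those $n$ with $q! \mid \lfloor f_i(p_n)\rfloor$ for every $i$. I would address this by establishing a stronger, joint equidistribution statement along primes: the vector $(\lfloor f_1(p_n)\rfloor \bmod q!,\ldots,\lfloor f_k(p_n)\rfloor \bmod q!)$ equidistributes over $(\Z/q!\Z)^k$ (or at least hits the all-zeros residue class with positive density), so that the subsequence of primes landing in $D_q$ is infinite and still dense enough that the quantitative Weyl-sum estimates underlying Baker's theorem survive the thinning. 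Concretely, one shows the Weyl sums $\frac1N \sum_{n \le N} e(F(p_n))$ remain small after inserting the indicator of the congruence conditions, using the transcendence and distinct-order hypotheses to decouple the $f_i$ modulo $q!$.

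Once the hypotheses of Proposition~\ref{mt:sufficient_condition} are verified, the conclusion that $\widetilde{D} = D$ is a vdC set is immediate from that proposition, with no further linear-algebra step needed since the $f_i$ already form a basis. It is worth remarking that the argument closely parallels the prime-power case treated in the previous subsection, with Baker's theorem playing the role that Weyl-type estimates for $(p-1)^{\theta}$ played there; the distinct-logarithmic-order condition $1 < \lambda_1 < \cdots < \lambda_k < \tfrac43$ is what simultaneously guarantees both the $\Q$-linear independence and the applicability of Baker's theorem to every nonzero irrational linear combination.
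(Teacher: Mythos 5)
Your proposal follows the same skeleton as the paper's proof: reduction to Proposition~\ref{mt:sufficient_condition} via the $\Q$-linear independence supplied by the distinct logarithmic orders, taking for $(h_n^{(q)})$ the elements of $D$ subject to the congruence conditions, removing that restriction by expanding its indicator into additive characters (your ``decoupling modulo $q!$'', which together with the positive-density remark is exactly the paper's first two steps), and reducing everything to Baker's Theorem~\ref{baker:uniform_distribution_of_entire_functions}. Your observation that $F=\sum_i x_i f_i$ has logarithmic order $\max\{\lambda_i\colon x_i\neq 0\}\in(1,\frac{4}{3})$, hence is transcendental and covered by Baker's theorem, is correct and is implicitly needed in the paper as well.

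The genuine gap is your treatment of the floor functions. After all reductions one must prove
\[
\lim_{N\to\infty}\frac{1}{N}\sum_{n\leq N}e\Bigl(\sum_{i=1}^{k}\alpha_i\lfloor f_i(p_n)\rfloor\Bigr)=0
\qquad\text{for }(\alpha_1,\ldots,\alpha_k)\in\R^k\setminus\Q^k,
\]
and you derive this from the uniform distribution of $\bigl(\sum_i\alpha_i f_i(p_n)\bigr)_n$ on the grounds that the difference $\sum_i\alpha_i\{f_i(p_n)\}$ is a ``bounded perturbation'' that ``does not destroy uniform distribution'' and is ``negligible''. That principle is false: bounded perturbations of precisely this shape can destroy uniform distribution, the standard example being that $(x_n)$ u.d.~mod $1$ says nothing about $(x_n-\{x_n\})=(\lfloor x_n\rfloor)$, which is integer valued and hence maximally non-uniform. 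The oscillating term $\sum_i\alpha_i\{f_i(p_n)\}$ has order $1$, the same size as the mod-one structure being controlled, so no negligibility argument can work. What is actually needed --- and what the paper does --- is to split the $\alpha_j$ into irrational and rational ones and write $e(\alpha_j\lfloor f_j(p_n)\rfloor)=s_j(\alpha_jf_j(p_n),f_j(p_n))$ with $s_j(x,y)=e(x-\{y\}\alpha_j)$ Riemann-integrable on $\T^2$, respectively $e(\alpha_j\lfloor f_j(p_n)\rfloor)=t_j(\lfloor f_j(p_n)\rfloor\bmod q)$ with $t_j$ continuous on $\Z/q\Z$. The Weyl sum above is then the average of the Riemann-integrable function $\prod_j s_j\prod_j t_j$ along the \emph{doubled} sequence
\[
\bigl(\alpha_1f_1(p_n),f_1(p_n),\ldots,\alpha_\ell f_\ell(p_n),f_\ell(p_n),f_{\ell+1}(p_n)/u,\ldots,f_k(p_n)/u\bigr)_{n\geq1},
\]
whose joint equidistribution follows from Baker's theorem via Weyl's criterion (every nonzero integer character gives a nonzero real linear combination of the $f_j$, again of non-integer logarithmic order in $(1,\frac{4}{3})$), and whose integral vanishes because $\int_{\T^2}s_j\,dx\,dy=0$. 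Without this device, or an equivalent one, the central claim of your second paragraph is unsupported and the rest of the argument has nothing to stand on.
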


\begin{proof}
  We enumerate $D=(\mathbf{d}_n)_{n\geq1}$, where
  \[\mathbf{d}_n:=\left(\lfloor f_1(p_n)\rfloor,\ldots,\lfloor f_k(p_n)\rfloor\right).\]

  First we show, that for every $q\in\N$ the set
  \[D^{(q)}:=\{(d_1,\ldots,d_k)\in D\colon q\mid d_i\}\] has positive
  relative density in $D$. We note that if
  $0\leq\left\{\frac{f_i(p_n)}{q}\right\}<\frac1q$ for $1\leq i\leq k$,
  then $\mathbf{d}_n\in D^{(q)}$. By Theorem
  \ref{baker:uniform_distribution_of_entire_functions} the
  sequence \[\left(\left(\frac{f_1(p_n)}{q},\ldots,\frac{f_k(p_n)}q\right)\right)_{n\geq1}\]
  is uniformly distributed and thus $D^{(q)}$ has positive density in
  $D$.

  For each $q\in\N$ we enumerate the elements of
  $D^{(q!)}=(\mathbf{d}^{(q!)}_n)_{n\geq1}$, such that
  $\lvert\mathbf{d}^{(q!)}_n\rvert$ is increasing. Since the
  logarithmic orders are distinct we immediately get that the
  functions $f_i$ are $\Q$-linearly independent. Thus by Proposition
  \ref{mt:sufficient_condition} it is sufficient to show that for all
  $q\in\N$ and all $\mathbf{x}=(x_1,\ldots,x_k)\in\R^k\setminus\Q^k$
  the sequence $(\mathbf{d}^{(q!)}_n\cdot\mathbf{x})_{n\geq1}$ is
  u.d. mod 1.

  Using the orthogonality relations for additive characters we get for
  any non-zero integer $h$, that
  \begin{multline*}
  \frac1{\lvert\{n\leq N\colon\mathbf{d}_n\in D^{(q!)}\}\rvert}
    \sum_{n\leq N}e\left(h\left(d^{(q!)}_n\cdot\mathbf{x}\right)\right)\\
  =\frac1{\lvert\{n\leq N\colon\mathbf{d}_n\in D^{(q!)}\}\rvert}
    \frac1{(q!)^k}\sum_{j_1=1}^{q!}\cdots\sum_{j_k=1}^{q!}
    \frac1N\sum_{n\leq
    N}e\left(d_n\cdot\left(h\mathbf{x}+\left(\frac{j_1}{q!},\ldots,\frac{j_k}{q!}\right)\right)\right).
  \end{multline*}
  The innermost sum is of the form
  \[\sum_{n\leq N}e(g(p_n)),\]
  with $g(x)=\sum_{i=1}^k\alpha_i\lfloor f_i(x)\rfloor$ for a certain
  $(\alpha_1,\ldots,\alpha_k)\in\R^k\setminus\Q^k$.

  By relabeling the terms we may suppose that there exists an $\ell$
  such that $\alpha_1,\ldots,\alpha_\ell\not\in\Q$ and
  $\alpha_{\ell+1},\ldots,\alpha_k\in\Q$. Furthermore we may write
  $\alpha_j=\frac{a_j}q$ for $\ell+1\leq j\leq m$. Then
  \[e(g(p_n))=e\left(\sum_{i=1}^k\alpha_k\lfloor
    f_i(p_n)\rfloor\right)= \prod_{j=1}^\ell
  s_j(\alpha_jf_j(p_n),f_j(p_n))\prod_{j=\ell+1}^kt_j(\lfloor
  f_j(p_n)\rfloor),\] where $s_j(x,y)=e(x-\{y\}\alpha_j)$ ($1\leq
  j\leq\ell$) and $t_j(z)=e\left(a_j\frac zq\right)$ ($\ell+1\leq
  j\leq k$).

  Since $s_j(x,y)$ is Riemann-integrable on $\T^2$ for
  $j=1,\ldots,\ell$ \; and $t_j(z)$ is continuous on $\Z_q=\Z/q\Z$, the
  function $\prod_{j=1}^\ell s_j\prod_{j=\ell+1}^kt_j$ is
  Riemann-integrable on $\T^{2\ell}\times\Z_q^{k-\ell}$.

  Now an application of Theorem
  \ref{baker:uniform_distribution_of_entire_functions} yields that for
  any $u\in\N$ the sequence
\[\left(\alpha_1f_1(p_n),f_1(p_n),\ldots,\alpha_\ell
  f_\ell(p_n),f_\ell(p_n),\frac{f_{\ell+1}(p_n)}u,\ldots,\frac{f_k(p_n)}u\right)_{n\geq1}\]
is u.d. in $\T^{2\ell}\times\T^{k-\ell}$. Since $\lfloor
x\rfloor\equiv a\pmod q$ is equivalent to $\frac xq\in[\frac
aq,\frac{a+1}q]$, we deduce that
\[\left(\alpha_1f_1(p_n),f_1(p_n),\ldots,\alpha_\ell
  f_\ell(p_n),f_\ell(p_n),\lfloor
  f_{\ell+1}(p_n)\rfloor,\ldots,\lfloor
  f_k(p_n)\rfloor\right)_{n\geq1}\]
is u.d. in $\T^{2\ell}\times\Z_q^{k-\ell}$, and Weyl's criterion
implies that
\[\lim_{N\to\infty}\frac1N\sum_{n\leq
  N}e\left(\sum_{i=1}^k\alpha_i\lfloor f_i(p_n)\rfloor\right)=0,\]
proving the theorem.
\end{proof}

\subsection{Functions of the form $x^\alpha\log^\beta x$}

In the one-dimensional case Boshernitzan \textit{et al.}
\cite{boshernitzan_kolesnik_quas+2005:ergodic_averaging_sequences}
showed, among other things, that these sets are vdC sets. Our aim
is to show an extended result for the $k$-dimensional case.
Therefore we use the following general criterion, which is a
combination of Fejer's theorem and van der Corput's difference
theorem.
\begin{thm}[{\cite[Theorem 3.5]{kuipers_niederreiter1974:uniform_distribution_sequences}}]
  Let $f(x)$ be a function defined for $x > 1$ that is $k$-times
  differentiable for $x > x_0$. If $f^{(k)}(x)$ tends monotonically to
  $0$ as $x\to\infty$ and if \; $\lim_{x\to\infty}x\lvert
  f^{(k)}(x)\rvert=\infty$, then the sequence $(f(n))_{n\geq1}$ is
  u.d. mod 1.
\end{thm}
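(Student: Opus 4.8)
This is a classical result---it is exactly the combination of Fej\'er's theorem and van der Corput's difference theorem mentioned just before the statement---so the plan is to prove it by induction on $k$. The base case $k=1$ is Fej\'er's theorem itself, while the passage from $k-1$ to $k$ will be carried out by differencing and applying van der Corput's difference theorem, i.e.\ the fact that $\N$ is a one-dimensional vdC set.

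\emph{Base case ($k=1$).} By Weyl's criterion it suffices to show $\frac1N\sum_{n=1}^N e(hf(n))\to0$ for every nonzero integer $h$. Replacing $f$ by $hf$ (which preserves all hypotheses, since $x\lvert hf'(x)\rvert=\lvert h\rvert\,x\lvert f'(x)\rvert\to\infty$) and, if necessary, by $-f$ (which conjugates the sum), I may assume $h=1$ and $f'>0$ decreasing to $0$, so that $f$ is eventually increasing with $f(N)=o(N)$ by Ces\`aro averaging. Euler--Maclaurin summation gives
\[\sum_{n=1}^N e(f(n))=\int_1^N e(f(x))\,dx+O(1)+2\pi i\int_1^N\psi(x)f'(x)e(f(x))\,dx,\qquad\psi(x)=x-\lf x\rf-\tfrac12.\]
The sawtooth integral is bounded trivially by $\tfrac12\int_1^N f'(x)\,dx=\tfrac12\bigl(f(N)-f(1)\bigr)=o(N)$. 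For the main integral I substitute $u=f(x)$ to obtain $\int_{f(1)}^{f(N)}e(u)\,\phi(u)\,du$ with $\phi(u)=1/f'(f^{-1}(u))$ positive and increasing; the second mean value theorem for integrals then bounds it by $O(\phi(f(N)))=O(1/f'(N))$. Dividing by $N$, the three contributions are $O\bigl(1/(Nf'(N))\bigr)$, $O(1/N)$ and $O(f(N)/N)$, all tending to $0$---the first precisely because $xf'(x)\to\infty$. This is the analytic heart of the argument.

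\emph{Inductive step.} Assume the theorem for $k-1$ and let $f$ satisfy the hypotheses for $k$; up to replacing $f$ by $-f$ (which leaves uniform distribution unchanged) I take $f^{(k)}>0$ decreasing to $0$. For each fixed positive integer $h$ set $g_h(x):=f(x+h)-f(x)$, so that
\[g_h^{(k-1)}(x)=f^{(k-1)}(x+h)-f^{(k-1)}(x)=\int_x^{x+h}f^{(k)}(t)\,dt.\]
Since $f^{(k)}$ is positive and decreasing, this integral over the shifting window $[x,x+h]$ is itself positive, decreasing in $x$, and tends to $0$; hence $g_h^{(k-1)}$ is monotone with limit $0$. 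Moreover $g_h^{(k-1)}(x)\geq h\,f^{(k)}(x+h)$, so
\[x\,g_h^{(k-1)}(x)\geq h\,\frac{x}{x+h}\,(x+h)f^{(k)}(x+h)\longrightarrow\infty.\]
Thus $g_h$ meets the hypotheses of the theorem with $k$ replaced by $k-1$, and the induction hypothesis shows that $\bigl(f(n+h)-f(n)\bigr)_{n\geq1}$ is u.d.~mod $1$. As this holds for every positive integer $h$, van der Corput's difference theorem yields that $(f(n))_{n\geq1}$ is u.d.~mod $1$, completing the induction.

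The main obstacle is the base case. Because $f'\to0$, the usual van der Corput first- and second-derivative tests for exponential sums are unavailable (they need a uniform lower bound on a derivative), so all cancellation must be extracted from the monotonicity of $f'$ and from the exact growth rate encoded in $xf'(x)\to\infty$; the estimate $\int_1^N e(f(x))\,dx=O(1/f'(N))=o(N)$ is where this hypothesis is genuinely used. By contrast the inductive step is routine: the mean value computations above show that differencing lowers the order of the hypothesis by one while preserving both the monotone decay and the divergence of $x f^{(k-1)}(x)$.
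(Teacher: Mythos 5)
Your proof is correct and follows precisely the route the paper indicates: the paper gives no proof of this statement, citing Kuipers--Niederreiter (Theorem 3.5) and describing it as ``a combination of Fej\'er's theorem and van der Corput's difference theorem,'' which is exactly your induction --- base case $k=1$ equal to Fej\'er's theorem, inductive step by differencing and the difference theorem. Your supporting details are also sound: the Euler--Maclaurin plus second-mean-value-theorem argument for the base case is the standard proof of Fej\'er's theorem, and in the inductive step the representation $g_h^{(k-1)}(x)=\int_x^{x+h}f^{(k)}(t)\,dt$ (valid since a monotone derivative is continuous by the Darboux property) correctly yields monotone decay to $0$ and $x\,g_h^{(k-1)}(x)\to\infty$, so the induction hypothesis applies.
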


Applying this theorem we get the following
\begin{cor}\label{cor:uniform_distribution_of_n_log_powers}
  Let $\alpha\neq0$ and
  \begin{itemize}
  \item either $\sigma>0$ not an integer and $\tau\in\R$ arbitrary
  \item or $\sigma>0$ an integer and $\tau\in\R\setminus[0,1]$.
  \end{itemize}
  Then the sequence $(\alpha n^\sigma\log^\tau n)_{n\geq2}$ is
  u.d. mod 1.
\end{cor}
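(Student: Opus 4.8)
The plan is to apply the criterion just stated to the smooth function $f(x)=\alpha x^\sigma\log^\tau x$, for a suitably chosen order of differentiation $k$. First I would record the shape of the higher derivatives: an easy induction on $j$ gives
\[f^{(j)}(x)=\alpha\,x^{\sigma-j}\sum_{i=0}^{j}c_{j,i}(\log x)^{\tau-i},\]
where the coefficients obey the recursion $c_{j+1,i}=(\sigma-j)c_{j,i}+(\tau-i+1)c_{j,i-1}$ (with $c_{j,i}=0$ for $i<0$), so that in particular the leading coefficient is $c_{j,0}=\sigma(\sigma-1)\cdots(\sigma-j+1)$. Since $(\log x)^{\tau-i}/(\log x)^{\tau}\to0$ for $i\geq1$, the term $i=0$ dominates whenever $c_{j,0}\neq0$, and one reads off $f^{(j)}(x)\sim\alpha c_{j,0}x^{\sigma-j}(\log x)^\tau$.

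Next I would choose $k$ so that the two size conditions of the criterion, namely $f^{(k)}(x)\to0$ and $x\lvert f^{(k)}(x)\rvert\to\infty$, hold at once. If $\sigma\notin\Z$, take $k=\lceil\sigma\rceil$, the unique integer with $\sigma<k<\sigma+1$; then $c_{k,0}\neq0$ and $\sigma-k\in(-1,0)$, so $f^{(k)}(x)\sim\alpha c_{k,0}x^{\sigma-k}(\log x)^\tau\to0$ while $x\lvert f^{(k)}(x)\rvert\sim\lvert\alpha c_{k,0}\rvert x^{1+\sigma-k}(\log x)^\tau\to\infty$ because $1+\sigma-k\in(0,1)$. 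If $\sigma\in\Z_{>0}$ no integer lies strictly between $\sigma$ and $\sigma+1$, and one must lean on the logarithmic factor. For $\tau<0$ take $k=\sigma$: then $f^{(\sigma)}(x)\sim\alpha\sigma!(\log x)^\tau\to0$ and $x\lvert f^{(\sigma)}(x)\rvert\sim\lvert\alpha\rvert\sigma!\,x(\log x)^\tau\to\infty$. For $\tau>1$ take $k=\sigma+1$: here $c_{\sigma+1,0}=0$, so the leading logarithmic term is killed and the genuine leading behaviour is $f^{(\sigma+1)}(x)\sim\alpha\sigma!\,\tau\,x^{-1}(\log x)^{\tau-1}$, whence $f^{(\sigma+1)}(x)\to0$ and $x\lvert f^{(\sigma+1)}(x)\rvert\sim\lvert\alpha\rvert\sigma!\lvert\tau\rvert(\log x)^{\tau-1}\to\infty$ since $\tau-1>0$. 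This analysis also explains the hypothesis $\tau\notin[0,1]$: for $\tau\in(0,1)$ the choice $k=\sigma$ yields $f^{(\sigma)}\to\infty$ while $k=\sigma+1$ yields $x\lvert f^{(\sigma+1)}\rvert\to0$, and the endpoints $\tau=0,1$ are exactly the borderline failures.

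Finally I would verify the monotonicity requirement, namely that $f^{(k)}$ tends to $0$ monotonically; this follows once $f^{(k+1)}$ has constant sign for large $x$. In each case the leading term of $f^{(k+1)}$ carries a nonzero coefficient: for $\sigma\notin\Z$ it is $\alpha c_{k,0}(\sigma-k)x^{\sigma-k-1}(\log x)^\tau$ with $\sigma-k\neq0$; for $\sigma\in\Z_{>0}$ and $\tau<0$ it is $\alpha\sigma!\,\tau\,x^{-1}(\log x)^{\tau-1}$ with $\tau\neq0$; and for $\sigma\in\Z_{>0}$ and $\tau>1$ it is $-\alpha\sigma!\,\tau\,x^{-2}(\log x)^{\tau-1}$. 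Hence $f^{(k+1)}$ keeps a fixed sign eventually, $f^{(k)}$ is eventually monotone, and together with $f^{(k)}\to0$ this gives monotone convergence to $0$; the criterion then applies and $(\alpha n^\sigma\log^\tau n)_{n\geq2}$ is u.d.\ mod $1$ (the finitely many missing initial terms being irrelevant for uniform distribution). The one genuinely delicate point is the integer case $\sigma\in\Z_{>0}$: the vanishing of the leading coefficient $c_{\sigma+1,0}$ forces one to descend to the next coefficient and confirm it is nonzero, and it is precisely this bookkeeping, together with the sign of $\tau-1$, that pins down the exceptional set $[0,1]$.
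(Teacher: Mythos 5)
Your proof is correct and follows exactly the route the paper intends: the paper derives the corollary simply by ``applying'' the Fej\'er/van der Corput criterion (Theorem 3.5 of Kuipers--Niederreiter) stated just before it, and your argument is precisely that application, with the choice of differentiation order $k$ (namely $k=\lceil\sigma\rceil$ for $\sigma\notin\Z$, $k=\sigma$ for $\sigma\in\Z$, $\tau<0$, and $k=\sigma+1$ for $\sigma\in\Z$, $\tau>1$) and the verification of the decay, growth, and eventual-monotonicity hypotheses carried out in the detail the paper omits.
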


Our third example is the following class of vdC sets.
\begin{thm}
Let $\alpha_1,\ldots,\alpha_k>0$ and $\beta_1,\ldots,\beta_k\in\R$,
such that $\beta_i\not\in[0,1]$ whenever $\alpha_i\in\Z$ for
$i=1,\ldots,k$. Then the set
\[D:=\{(\lfloor n^{\alpha_1}\log^{\beta_1}n\rfloor,\ldots,\lfloor
n^{\alpha_k}\log^{\beta_k}n\rfloor)\colon n\in\N\}\]
is a vdC set.
\end{thm}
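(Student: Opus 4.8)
The plan is to follow the template of the proof of Theorem~\ref{thm:entire_functions_vdC}, substituting Corollary~\ref{cor:uniform_distribution_of_n_log_powers} for Baker's theorem (Theorem~\ref{baker:uniform_distribution_of_entire_functions}) as the basic uniform distribution input. Concretely, set $g_i(n)=\lfloor n^{\alpha_i}\log^{\beta_i}n\rfloor$ and aim to verify the hypotheses of Proposition~\ref{mt:sufficient_condition}. The very first step is to check $\Q$-linear independence of the underlying functions $f_i(x)=x^{\alpha_i}\log^{\beta_i}x$: since the growth rates $x^{\alpha_i}\log^{\beta_i}x$ are pairwise distinct (ordered lexicographically by $(\alpha_i,\beta_i)$), no nontrivial rational combination can vanish identically, so the family is linearly independent and the basis in Proposition~\ref{mt:sufficient_condition} is the full family. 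This means it suffices to produce, for each $q\in\N$, a suitable sequence $(h_n^{(q)})$ lying in the divisibility-restricted set $D_q$ for which $(h_n^{(q)}\cdot\mathbf{x})$ is u.d.\ mod $1$ for every $\mathbf{x}\in\R^k\setminus\Q^k$.

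Next I would establish the positive-density step exactly as in the entire-function case: for fixed $q$, the point $\mathbf{d}_n$ lies in $D^{(q)}=\{\mathbf{d}\in D\colon q\mid d_i\}$ whenever $0\le\{f_i(n)/q\}<1/q$ for all $i$, and Corollary~\ref{cor:uniform_distribution_of_n_log_powers} (applied coordinatewise, together with the multidimensional Weyl criterion, after checking that the hypotheses transfer to each scaled coordinate $n^{\alpha_i}\log^{\beta_i}n/q$) shows the joint sequence $(f_1(n)/q,\ldots,f_k(n)/q)$ is u.d.\ in $\T^k$, so $D^{(q)}$ has positive relative density. Enumerating $D^{(q!)}=(\mathbf{d}_n^{(q!)})$ by increasing norm then gives the candidate sequence $(h_n^{(q)})$. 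The remaining task is the core Weyl sum estimate: for any nonzero integer $h$ and any $\mathbf{x}\notin\Q^k$, show that the exponential sum over the restricted index set tends to $0$. Following the cited proof, one expands via orthogonality over residues mod $q!$, reducing to sums of the form $\sum_{n\le N}e(g(n))$ with $g(n)=\sum_i\alpha_i\lfloor f_i(n)\rfloor$ and $(\alpha_i)\in\R^k\setminus\Q^k$; splitting the indices into those with $\alpha_j\notin\Q$ and those with $\alpha_j\in\Q$, one writes the summand as a product of a Riemann-integrable function on $\T^2$ (handling the floor via $\lfloor x\rfloor=x-\{x\}$) times a function continuous on $\Z_q$, and applies the u.d.\ of $(\alpha_j f_j(n),f_j(n),\ldots,f_j(n)/u)$ in the appropriate product of tori to conclude the sum vanishes in the limit.

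The main obstacle, and the only place where genuine care beyond transcription is needed, is verifying that the differentiability hypotheses of Corollary~\ref{cor:uniform_distribution_of_n_log_powers} survive the scalings and linear combinations that arise. In particular, when I apply the corollary to a coordinate scaled by $1/q$ or $1/u$, the leading constant $\alpha$ is replaced by $\alpha_i/q\neq0$, which is harmless; but when I need u.d.\ of the \emph{combined} function $g(x)=\sum_{j}\alpha_j x^{\alpha_j}\log^{\beta_j}x$ with irrational coefficients, I must confirm that $g$ still satisfies the monotonicity of $g^{(m)}$ and the condition $x\lvert g^{(m)}(x)\rvert\to\infty$ for a suitable derivative order $m$. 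Because the $(\alpha_j,\beta_j)$ are distinct, the sum $g$ is asymptotically dominated by its single fastest-growing term, whose $m$-th derivative behaves like $c\,x^{\alpha-m}\log^{\beta}x$ (for non-integer $\alpha$) or $c\,x^{\alpha-m}\log^{\beta-1}x$ (for integer $\alpha$ with $\beta\notin[0,1]$); choosing $m=\lceil\alpha\rceil$ makes $g^{(m)}$ tend monotonically to $0$ while $x g^{(m)}(x)\to\infty$, exactly mirroring the one-dimensional hypothesis. Once this domination argument is made rigorous, the corollary applies to $g$ and the rest of the proof is the routine product-structure bookkeeping identical to Theorem~\ref{thm:entire_functions_vdC}.
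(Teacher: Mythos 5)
Your proposal follows exactly the paper's route: the paper's own proof is a one-line reduction --- repeat the argument of Theorem~\ref{thm:entire_functions_vdC} with Corollary~\ref{cor:uniform_distribution_of_n_log_powers} in place of Baker's theorem (Theorem~\ref{baker:uniform_distribution_of_entire_functions}) --- and you have carried out precisely that substitution, including the reduction via Proposition~\ref{mt:sufficient_condition}, the positive-density step, and the Weyl-sum/product decomposition. You in fact supply more detail than the paper does, in particular by making explicit that the u.d.\ input must be applied to linear combinations $g(x)=\sum_j c_j x^{\alpha_j}\log^{\beta_j}x$ rather than to single terms, which is the one point where Corollary~\ref{cor:uniform_distribution_of_n_log_powers} does not apply verbatim.

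One detail in your treatment of that point is wrong as stated, though easily repaired. When the dominant term has \emph{integer} exponent $\alpha$ and $\beta>1$ (a case the theorem explicitly allows), your choice $m=\lceil\alpha\rceil=\alpha$ gives $g^{(m)}(x)\sim c\,\alpha!\,\log^{\beta}x\to\infty$, so the hypothesis that $g^{(m)}$ tends monotonically to $0$ in the Fej\'er/van der Corput criterion fails; your displayed asymptotics $c\,x^{\alpha-m}\log^{\beta-1}x$ is only valid for $m>\alpha$, which contradicts $m=\lceil\alpha\rceil$. The correct choices are: $m=\lceil\alpha\rceil$ when $\alpha\notin\Z$; $m=\alpha$ when $\alpha\in\Z$ and $\beta<0$; and $m=\alpha+1$ when $\alpha\in\Z$ and $\beta>1$, in which case $g^{(m)}(x)\sim c\,x^{-1}\log^{\beta-1}x\to0$ while $x\lvert g^{(m)}(x)\rvert\sim c\log^{\beta-1}x\to\infty$. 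A second, smaller remark: the theorem does not assume the pairs $(\alpha_i,\beta_i)$ are pairwise distinct, so your linear-independence claim (which presupposes distinctness) should be preceded by the observation that coinciding pairs give identical functions, whereupon Proposition~\ref{mt:sufficient_condition} (applied with a basis of the span) reduces everything to the distinct case --- exactly as the paper does for repeated exponents in its prime-power theorem.
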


\begin{proof}
Following the same arguments as is the proof of Theorem
\ref{thm:entire_functions_vdC} and replacing the uniform distribution
result for entire functions (Theorem \ref{thm:entire_functions_vdC})
by the corresponding result for $n^\alpha\log^\beta n$ sequences (Corollary
\ref{cor:uniform_distribution_of_n_log_powers}) yields the proof.
\end{proof}

\section*{Acknowledgment}

This research work was done when the first author was a visiting
lecturer at the Department of Analysis and Computational Number Theory
at Graz University of Technology. The author thanks the institution
for its hospitality.

The second author acknowledges support of the project F 5510-N26
within the special research area ``Quasi Monte-Carlo Methods and
applications'' founded by the Austrian Science Fund.


\providecommand{\bysame}{\leavevmode\hbox to3em{\hrulefill}\thinspace}
\providecommand{\MR}{\relax\ifhmode\unskip\space\fi MR }
\providecommand{\MRhref}[2]{%
  \href{http://www.ams.org/mathscinet-getitem?mr=#1}{#2}
}
\providecommand{\href}[2]{#2}

\end{document}